\numberwithin{equation}{section}
\newcommand{\ind}{{\mathds{1}}}
\begin{document}

\title{Realistic extensions of a Brownian ratchet for protein
  translocation\thanks{Supported by the BMBF through FRISYS
    (Kennzeichen 0313921) } }


\author{A. Depperschmidt \and N. Ketterer \and \\ 
  P. Pfaffelhuber }


\institute{A. Depperschmidt \at
University of Freiburg\\
\email{depperschmidt@stochastik.uni-freiburg.de}           
\and N. Ketterer \at 
University of Freiburg\\
\email{nicolasketterer@gmx.de}
\and P. Pfaffelhuber\at 
University of Freiburg\\
\email{p.p@stochastik.uni-freiburg.de}
}

\date{Received: date / Accepted: date}

\maketitle

\begin{abstract}
  We study a model for the translocation of proteins across membranes
  through a nanopore using a ratcheting mechanism. When the protein
  enters the nanopore it diffuses in and out of the pore according to
  a Brownian motion. Moreover, it is bound by ratcheting molecules
  which hinder the diffusion of the protein out of the nanopore, i.e.\
  the Brownian motion is reflected such that no ratcheting molecule
  exits the pore.  New ratcheting molecules bind at rate
  $\gamma$. Extending our previous approach
  \citep{depper_pfaffel:2010} we allow the ratcheting molecules to
  dissociate (at rate $\delta$) from the protein (Model~I).  We also
  provide an approximate model (Model~II) which assumes a Poisson
  equilibrium of ratcheting molecules on one side of the current
  reflection boundary. Using analytical methods and simulations we
  show that the speed of both models are approximately the same. Our
  analytical results on Model~II give the speed of translocation by
  means of a solution of an ordinary differential equation.

  \keywords{Reflected Brownian motion \and Dissociation \and
    Ratcheting mechanism \and Cumulative Process}
  \subclass{92C37 \and 60J65 \and 60G55 \and 60K05 }
\end{abstract}

\section{Introduction}
\label{intro}
Most proteins are generated within the cellular cytosol and have to be
transported where they are needed. Frequently, they have to be
transported across membranes, e.g.\ into the endoplasmatic reticulum,
mitochondria or other organelles, or they are secreted, i.e.\
transported across the cell wall \citep{NeupertBrunner2002}. Protein
translocation occurs either \emph{co-translationally}, e.g.\ when
ribosomes attach to the membrane of the endoplasmatic reticulum and
proteins are transported into the lumen through a nanopore already
during translation, or \emph{post-translationally}
\citep{Rapoport2007}.

Consider translocation into \emph{mitochondria} first. Over 99\% of
mitochondrial proteins are translocated post-translationally
\citep{Wickner2005} and the molecular mechanisms have been studied in
detail. When a substrate is translocated into the mitochondrium, it
has to cross the translocase outer membrane (TOM) and the translocase
inner membrane (TIM). The mitochondrial heat shock protein 70
(mtHsp70) is known to play a crucial role in translocation of
substrates \citep{Glick1995}. mtHsp70 changes between its ATP-bound
form, where it has an open pocket for binding to the substrate, and the
ADP-bound form where the pocket is closed
\citep{NeupertBrunner2002,NeupertHermann2007}. There are two differing
opinions about the role of mtHsp70 in translocation. Either mtHsp70
pulls \emph{actively} the substrate through the nanopore, or it only
prevents backsliding of the substrate, leading to a \emph{passive}
mechanism
\citep{Glick1995,NeupertBrunner2002,NeupertHermann2007}. Such a
ratcheting mechanism was first introduced by \cite{Schneider1994} and
\cite{Okamoto2002}. The best argument for active pulling is that the
protein which is translocated must be unfolded outside the
mitochondria, so translocation occurs against a force
\citep{Pfanner2002}. However, experimental evidence is still lacking
and binding of mtHsp70 to the substrate can explain observations
\citep{NeupertHermann2007}.

Many proteins are translocated into the \emph{endoplasmatic reticulum}
already during translation. Moreover, post-translational translocation
has been suggested to occur through a passive ratcheting mechanism in
eukaryotic cells by \cite{Matlack1999}. Here, the nanopore is formed
by the Sec63 complex and the ratcheting molecules are called BiP which
is a member of the Hsp70 family. BiP exists in an ATP-bound state with
an open binding pocket. This is activated by an interaction with the
J-domain of Sec63, which can also close the binding pocket after BiP
is bound to the substrate. Interestingly, BiP binds unspecifically to
the substrate and therefore is able to mediate translocation of many
different proteins \citep{Rapoport2007}. For protein translocation
into \emph{chloroplasts} in plants, ratcheting mechanisms have not yet
been considered yet \citep{Strittmatter2010,Li2010}.

~

Quantitative descriptions of cellular ratcheting mechanisms began with
pioneering work of \cite{Simon_al:1992} and \cite{Peskin_al:1993}. In their
model, ratcheting molecules can bind to the protein at the nanopore and hinder
diffusion out of the nanopore completely (perfect ratchet) or only with a
probability $p<1$ (imperfect ratchet). The assumption that ratcheting molecules can
only bind directly at the pore is motivated by translocation into the
endoplasmatic reticulum where the ratcheting molecule, BiP, is known to
interact with Sec61 which builds the pore. However, the interaction with the
nanopore can only mean that concentration of ratcheting molecules is higher
near the pore. (See \cite{Griesemer2007}, for a mathematical model of the
interaction of the ratcheting molecule with the nanopore.)  We take the same
approach as \cite{Orsogna2007} and assume that ratcheting molecules may bind
uniformly along the protein. This assumes that the concentration of ratcheting
molecules is approximately constant between the pore and the first bound
molecules.

Most quantitative descriptions of the Brownian ratchet assume a finite
length of ratcheting molecules (e.g.\
\citealp{Zandi2003,Orsogna2007}). \cite{Ambjornsson:2005} describe a
\emph{car parking effect} arising by the distribution of ratcheting
molecules along the substrate. They assume a fast binding and
unbinding of ratcheting molecules, leading to an effective probability
any position of the protein is bound. The case of fast binding is
studied in \cite{Fricks2005}, who derive a law of large numbers and a
central limit theorem in the case that Brownian motion has a
drift. Effects of binding affinities which depend on the protein
sequence have been put forward in
\cite{Abdolvahab2008,Abdolvahab2011}.

~

The goal of this paper is to introduce a realistic model for the ratcheting
mechanism for protein translocation in mitochondria and the endoplasmatic
reticulum as described above. \cite{Elston:2002} tried to decide if
translocation happens actively (by pulling the protein) or passively (by a
ratcheting mechanism) using likelihood ratio tests. However, after fitting the
model parameters using least squares, their statistical tests were not
significant and both models fitted experimental data well. New molecular
techniques have to give rise to better data to decide between active an
passive transport. In the present paper, we obtain predictions for the speed
of translocation which will shed light on empirical studies in the future. 

Our model of the Brownian ratchet is similar to the ratchet studied in
\cite{Liebermeister:2001}, but is continuous, assumes free diffusion
of the protein and binding of ratcheting molecules anywhere along the
protein. Since ratcheting molecules can dissociate from the protein,
we call the process a \emph{broken Brownian ratchet}. We give two
models, termed Model~I and Model~II. For Model~I, which takes into
account every ratcheting molecule at all times, we can only show that
the speed of translocation is positive
(Theorem~\ref{T:speedI}). Model~II serves as an approximation. When a
ratcheting molecule dissociates, we assume that binding and unbinding
of ratcheting molecules is in equilibrium. This model is easier to
analyze but has about the same behavior for the speed of translocation
(see the simulation results in Figure~\ref{fig3}). For Model~II, we
can show a law of large numbers giving the speed of transport in terms
of a solution of a differential equation, Theorem~\ref{T:speedII}.

\section{The model}
The model we study extends the Brownian ratchet introduced in
\cite{depper_pfaffel:2010}.  Namely, protein translocation across a
membrane is based on the following assumptions:
\begin{setlength}{\leftmargini}{1cm} 
  \renewcommand{\labelenumi}{(\roman{enumi})}
  \begin{enumerate}
  \item The protein moves in and out with the same probability.
  \item The protein movement is reflected at binding ratcheting
    molecules.
  \item The protein is infinitely long.
  \item The ratcheting molecules are infinitely small.
  \item Ratcheting molecules may bind to the protein at a continuum of
    sites.
  \item The ratcheting molecules may dissociate from the protein.
\end{enumerate}
\end{setlength}
These assumptions are exactly the same as in
\cite{depper_pfaffel:2010} except for the last. They assumed that the
dissociation rate of the ratcheting molecules from the protein is much
smaller than their binding rate to the protein, leading to effectively
no dissociation of ratcheting molecules.

First, we translate the above set of assumptions into a mathematical
model. By $X_t$, we denote the total length of the protein which
already crossed the membrane between times $0$ and $t$. In addition,
$R_t\leq X_t$ is the largest distance of $X_t$ to a ratcheting
molecule. We assume that the protein has already moved across the
membrane by time $0$, such that ratcheting molecules may bind between
$X_t$ and $-\infty$ at constant rate. Consider the pair $(X_t,
R_t)_{t\geq 0}$, where $(X_t)_{t\geq 0}$ is a Brownian motion, started
in $X_0=x$, reflected at $R_t := \sup \mathcal R_t$ and $\mathcal R_t
\subseteq (-\infty, X_t]$ is the random set of all bound ratcheting
molecules at time $t$. The dynamics of $\mathcal R_t$ is as follows:
starting in $\mathcal R_0 = \{0\}$, a point at $dx$ is added at rate
$\gamma \ind_{x\in (-\infty,X_t]} dx$ (i.e.\ a transition from
$\mathcal R_{t-}$ to $\mathcal R_t := \mathcal R_{t-}\cup\{x\}$
occurs). In addition, every $x\in\mathcal R_t$ is deleted at rate
$\delta$ (i.e.\ a transition from $\mathcal R_{t-}$ to $\mathcal R_t
:= \mathcal R_{t-}\setminus \{x\}$ occurs at rate $\delta \ind_{x\in
  \mathcal R_{t-}}$). Note that the last mechanism with rate $\delta$
models the dissociation of ratcheting molecules from the protein (see (vi)
above). We rely on the following graphical construction, which is
illustrated in Figure~\ref{fig1}.

\begin{definition}[\boldmath$\gamma/\delta$-broken Brownian ratchet, Model
  I]\label{modelI}
  \sloppy Let $\mathcal{N}^{\gamma,\delta}$ be a Poisson point process
  on $[0,\infty)\times \mathbb R \times [0,\infty)$ with intensity
  measure $\gamma \delta e^{-\delta z} \lambda^3(d\tau ,dr, dz)$,
  conditioned on $\mathcal{N}^{\gamma, \delta}(\{0\}\times
  \{0\}\times[0,\infty))=1$, where $\gamma >0, \delta\geq 0$ and
  $\lambda^3$ denotes the Lebesgue measure on $[0,\infty)\times\mathbb
  R \times [0,\infty)$. Moreover, $\mathcal{N}^\gamma$ is the
  projection of $\mathcal{N}^{\gamma,\delta}$ on the first two
  coordinates.  Let $(B_t^0)_{t\geq 0}, (B_t^1)_{t\geq 0},\dots $ be a
  sequence of independent Brownian motions, independent of
  $\mathcal{N}^{\gamma,\delta}$, with $B_0^0 = B_0^1 = \cdots =0$. We
  define recursively times $t_0, t_1,\dots$ when the reflection
  boundary changes as well as triples $(\tau_0, r_0, z_0), (\tau_1,
  r_1, z_1), (\tau_2, r_2, z_2),\dots\in \mathcal{N}^{\gamma,\delta}$
  such that $r_n$ is the reflection boundary in the interval $[t_n,
  t_{n+1})$. Define $t_0 = \tau_0 = r_0 =0$, $z_0:=z$ if $(0,0,z) \in
  \mathcal{N}^{\gamma,\delta}$ and for $x_0 \ge 0$ set $X_t := |x_0+B_t^0|$
  for $t_0\leq t < t_1$, and the set of new reflection boundaries above
  $r_n$ before time $\tau_n+z_n$, 
  \begin{equation}
    \begin{aligned}
      \Gamma_n & := \{(\tau,r,z) \in \mathcal{N}^{\gamma,\delta}: t_n \leq  \tau< \tau_n
      + z_n, r_n\leq r < X_\tau\}, \\ 
      \inf \Gamma_n & := (\tau,r,z) \text{ if } (\tau,r,z)\in \Gamma_n
      \text{ and }\tau = \inf \pi_1 \Gamma_n, 
    \end{aligned}
  \end{equation}
  where $\pi_1$ is the projection on the first coordinate. Now, we set
  recursively for $n=0,1,2,\dots$
  \begin{equation}
    \label{eq:taun1}
    \begin{aligned}
      t_{n+1} & := (\tau_n + z_n)\wedge \inf \pi_1\Gamma_n,\\
      (\tau_{n+1}, & \, r_{n+1}, z_{n+1}) := \begin{cases} \inf
        \Gamma_n, &
        \text{ if } \Gamma_n\not=\emptyset, \\[1ex]
        (\tau,r,z) & \text{ if }\Gamma_n=\emptyset \text{ and } \\ &
        \quad r = \max\{r\leq X_\tau: \tau \leq \tau_n+z_n <
        \tau+z\}.
      \end{cases}
    \end{aligned}
  \end{equation}
  Finally,
  \begin{equation}
    \left.
      \begin{aligned}
        R_t & := r_n,\\
        X_t & := R_t + |B_{t-t_n}^n + X_{t_n-} - R_t|
      \end{aligned}
    \right\} \text{ for $t_n \leq t < t_{n+1}$.}
  \end{equation}
  We refer to the process $(X_t, R_t)_{t\geq 0}$ as the
  \emph{$\gamma/\delta$-broken Brownian ratchet, Model~I} with initial
  value $(X_0, R_0)=(x_0,0)$.
\end{definition}

\begin{remark}[Interpretation]
  In the definition above, $(\tau,r,z) \in
  \mathcal{N}^{\gamma,\delta}$ represents a ratcheting molecule which
  binds at time $\tau$ at position $r$ to the protein and stays bound
  up to time $\tau+z$. We start at time $t=0$ with one bound molecule
  at position 0. The variables $t_1, t_2,\dots$ represent times when
  the reflection boundary changes. Note that $R_t$, at any time $t$,
  is the position of the ratcheting molecule which is bound (i.e.\
  $\tau\leq t < \tau+z$) and closest to $X_t$.  By time $t$, let us
  denote by $(\tau,r,z)\in \mathcal{N}^{\gamma,\delta}$ the active
  point if $R_t=r$ (and recall that all points in
  $\mathcal{N}^{\gamma,\delta}$ have all coordinates different, almost
  surely). By definition, $t_n$ is the time when the active point
  changes for the $n$th time. There are two possibilities when the
  active point $(\tau,r,z)$ changes. First, a new Poisson point can
  fall between $r$ and $X_t$ (as described by the first line
  in~\eqref{eq:taun1}). Second, when $t=\tau+z$, the current active
  point dissociates, and the next active point is the one closest to
  $X_t$ (given by the second line in~\eqref{eq:taun1}). Last, note that
  $X_{t_n} = X_{t_n-}$ by construction, for all $n$, meaning that
  $(X_t)_{t\geq 0}$ is continuous and note that $|B_{t-t_n}^k +
  X_{t_n-} - R_t|$ is a Brownian motion, started at $X_{t_n-} - R_t$,
  reflected at 0, such that $(X_t)_{t\geq 0}$ is reflected at $R_t$ at
  all times.

  The graphical construction of $(X_t)_{t\geq 0}$ is done step by step
  between jump times of the reflection boundary, $(R_t)_{t\geq 0}$,
  i.e.\ when the active point changes. This works since, after any
  finite time $t$ there may only be a finite number of jumps of the
  reflection boundary so that the construction works for any large
  time.
\end{remark}

\begin{figure}[htb]
  \begin{center}
    \includegraphics[width=9cm]{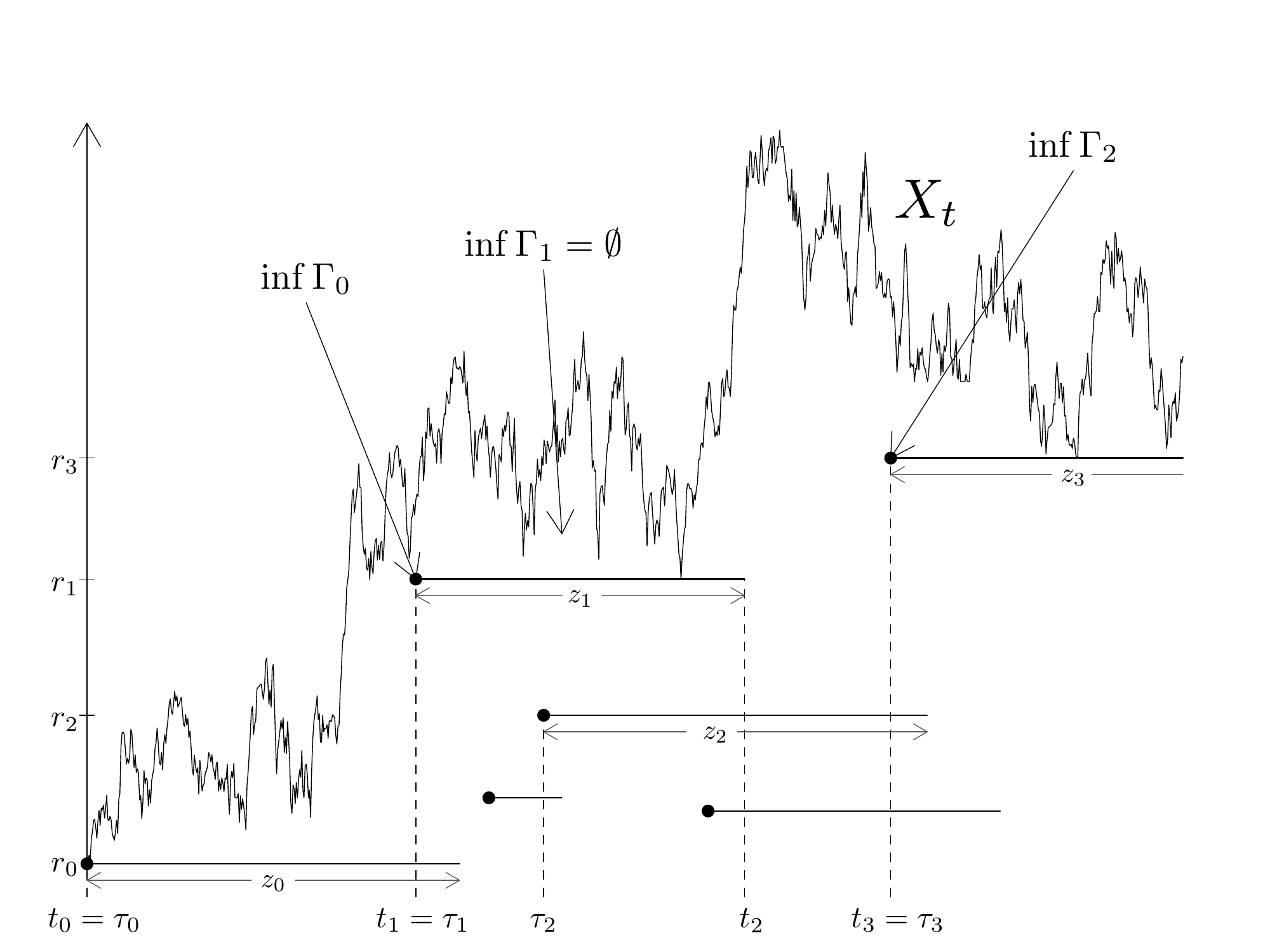}%
    \caption{\label{fig1} One realization of the graphical
      construction of Model~I. The path of $(X_t)_{t\geq 0}$ is
      reflected at the boundary $(R_t)_{t\geq 0}$.}
  \end{center}
\end{figure}

\noindent
For Model~I we show the following result.

\begin{theorem}[Speed of the broken Brownian ratchet, Model~I]
  Let \label{T:speedI} $(X_t,R_t)_{t\geq 0}$ be a broken Brownian
  ratchet, Model~I. Then, there are constants $0<c\leq C < \infty$,
  where $c$ depends on $\gamma$ and $\delta$ and $C$ only depends on
  $\gamma$ such that
  \[ c \leq \underline a_{\gamma,\delta} :=\liminf_{t\to\infty}
  \frac{X_t}{t} \leq \overline a_{\gamma,\delta}
  :=\limsup_{t\to\infty} \frac{X_t}{t} \leq C\] almost
  surely. Moreover, $\underline a_{\gamma,\delta} = \gamma^{1/3}
  \underline a_{1, \delta/\gamma^{2/3}}$ and $\overline
  a_{\gamma,\delta} = \gamma^{1/3} \overline a_{1,
    \delta/\gamma^{2/3}}$.
\end{theorem}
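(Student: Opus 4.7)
The theorem has three parts---a Brownian scaling identity, a $\delta$-independent upper bound $C$, and a strictly positive lower bound $c$---which I address separately.

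\emph{Scaling.} Set $\tilde X_s := \gamma^{1/3} X_{s/\gamma^{2/3}}$. Each driving Brownian motion rescales to a standard Brownian motion, and the map $(\tau,r,z)\mapsto(\gamma^{2/3}\tau,\gamma^{1/3}r,\gamma^{2/3}z)$ pushes the intensity $\gamma\delta e^{-\delta z}\,d\tau\,dr\,dz$ of $\mathcal{N}^{\gamma,\delta}$ forward to $\delta' e^{-\delta' z'}\,d\tau'\,dr'\,dz'$ with $\delta' := \delta/\gamma^{2/3}$. Hence $\tilde X$ is a $1/\delta'$-broken Brownian ratchet, and the pathwise identity $X_t/t = \gamma^{1/3}\tilde X_{\gamma^{2/3}t}/(\gamma^{2/3}t)$ yields both scaling relations on passing to $\liminf$ and $\limsup$. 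From here on we take $\gamma=1$.

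\emph{Upper bound.} I would couple $(X_t,R_t)$ with the perfect ($\delta=0$) ratchet $(X_t^{(0)},R_t^{(0)})$ of \cite{depper_pfaffel:2010} using the same Brownian drivers and the same spatial Poisson points (with lifetimes set to $+\infty$ in the latter). Since dissociation only removes points from $\mathcal R_t$, an induction across arrival and dissociation events yields $R_t \le R_t^{(0)}$ at every time; standard Skorokhod-problem monotonicity (a smaller reflecting floor driven by the same Brownian motion forces a smaller reflected path) then gives $X_t \le X_t^{(0)}$ pathwise, and the finite a.s.\ bound $\overline a_{\gamma,\delta} \le \overline a_{\gamma,0} =: C(\gamma) < \infty$ is inherited from \cite{depper_pfaffel:2010}.

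\emph{Lower bound.} This is the hardest part because dissociation of the active molecule can cause $R_t$ to jump downward, so unlike the perfect ratchet $R_t$ is not monotone and a naive count of upward jumps does not suffice. My plan is a thinning-plus-renewal argument. Fix $z^\star > 0$ large and retain only the long-lived points $(\tau,r,z)$ with $z \ge z^\star$; these form an independent Poisson point process on $[0,\infty)\times\mathbb R$ of rate $e^{-\delta z^\star}$, and within any time window of length $\le z^\star$ they do not dissociate and therefore act as a reduced-rate perfect ratchet, to which the positive-speed bound of \cite{depper_pfaffel:2010} applies. To convert this into a pathwise bound I would identify a renewal-type subsequence of excursion-endpoint times $X_{T_k} = R_{T_k}$, apply the strong Markov property, and bound $\mathbb E[X_{T_{k+1}} - X_{T_k}]$ below and $\mathbb E[T_{k+1} - T_k]$ above, so that the elementary renewal theorem gives $\liminf_t X_t/t \ge c > 0$ almost surely. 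The main technical obstacle, and the step that I expect to need the most care, is controlling the downward jumps of $R_t$: one must show that the Poisson configuration of still-bound molecules just below $R_t$ is tight uniformly in $t$, so that when the active molecule dissociates the next-highest bound molecule is never too far below. I would establish this tightness by a Lyapunov/stationarity estimate for the gap between $R_t$ and its predecessor in $\mathcal R_t$, exploiting the exponential tails of Poisson spacings and of the lifetime distribution.
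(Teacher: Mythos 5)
Your scaling argument and upper bound are essentially the paper's: the pushforward calculation for $g_\gamma(\tau,r,z)=(\gamma^{2/3}\tau,\gamma^{1/3}r,\gamma^{2/3}z)$ is exactly Proposition~\ref{P:scaleI}, and the upper bound via the $\delta=0$ perfect ratchet is Step~2 of the paper's proof (the paper asserts monotonicity in $\delta$ more tersely; your coupling by setting lifetimes to $+\infty$ is a cleaner justification and would be welcome).

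The lower bound is where your plan diverges, and where it has a gap. Your thinning---keep only $(\tau,r,z)$ with $z\geq z^\star$---does \emph{not} make the dynamics look like a perfect ratchet inside a window of length $z^\star$: a retained point with arrival time $\tau$ dissociates at $\tau+z$, so inside an arbitrary interval $[s,s+z^\star]$ there will generally be retained points whose dissociation time falls in the interval. After such an event $R_t$ still jumps down, and you are back to exactly the difficulty you flag at the end (controlling the configuration of bound points below $R_t$, which is a genuinely nonlocal quantity with long-range dependence). The tightness/Lyapunov argument you propose would have to be carried out in full, and it is not obviously easier than the original problem.

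The paper's thinned Model~I avoids this entirely by thinning in a different way. Rather than discarding short-lived points, it discards \emph{old} points: at a dissociation event, the next reflection boundary is chosen only among Poisson points whose arrival time is $\geq \sup\Sigma_{n+1}$, the most recent time $\hat X$ hit $\hat R$. This forces the Poisson points used on $[\hat\sigma_{k-1},\hat\sigma_k)$ and on $[\hat\sigma_k,\hat\sigma_{k+1})$ to be disjoint, so the pairs $(\hat\sigma_n-\hat\sigma_{n-1},\,\hat X_{\hat\sigma_n}-\hat X_{\hat\sigma_{n-1}})$ are i.i.d.\ by construction---no tightness estimate for the sub-boundary configuration is needed. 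The price is that the downward jump after a dissociation can be large, but this is absorbed into a single uniform estimate $\sup_x\mathbb{E}_x[\hat\sigma_0]<\infty$ (the paper's Lemma on finite first moments of $\sigma$), obtained by dominating $\hat X_t-\hat R_t$ with a simple jump process $I_t$ that resets to $1$ at rate $\gamma$ and to $\infty$ at rate $\delta$. Then the speed is $\mathbb{E}[\hat X_{\hat\sigma_1}-\hat X_{\hat\sigma_0}]/\mathbb{E}[\hat\sigma_1-\hat\sigma_0]>0$ by the law of large numbers for cumulative processes. Your overall strategy (thin, find renewals, apply renewal LLN) is the right one, but the specific thinning you chose does not manufacture the renewal structure, while the paper's does.
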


\begin{remark}[Interpretation]
  The result says that the speed of the broken Brownian ratchet,
  Model~I, (if it exists as $a_{\gamma,\delta} := \underline
  a_{\gamma,\delta} = \overline a_{\gamma,\delta}$) is positive, no
  matter how large $\delta$ is. In addition, it scales with
  $\gamma^{1/3}$ like $a_{\gamma,\delta} = \gamma^{1/3} a_{1,
    \delta/\gamma^{2/3}}$.
\end{remark}

~

\noindent
Although the speed of the broken Brownian ratchet is positive by
Theorem~\ref{T:speedI}, we aim at a more complete picture. However,
the difficulty in the analysis of Model~I is that it is not local in
the sense that a single Poisson point can be active more than once. In
particular, dependencies between Poisson points arise as time
evolves. For this reason, we consider a second model with similar
properties as Model~I. Briefly speaking, we introduce a Model~II by
assuming that the possible reflection boundaries below the currently
active one are always in their equilibrium. 

~

\begin{figure}[htb]
  \begin{center}
    \includegraphics[width=9cm]{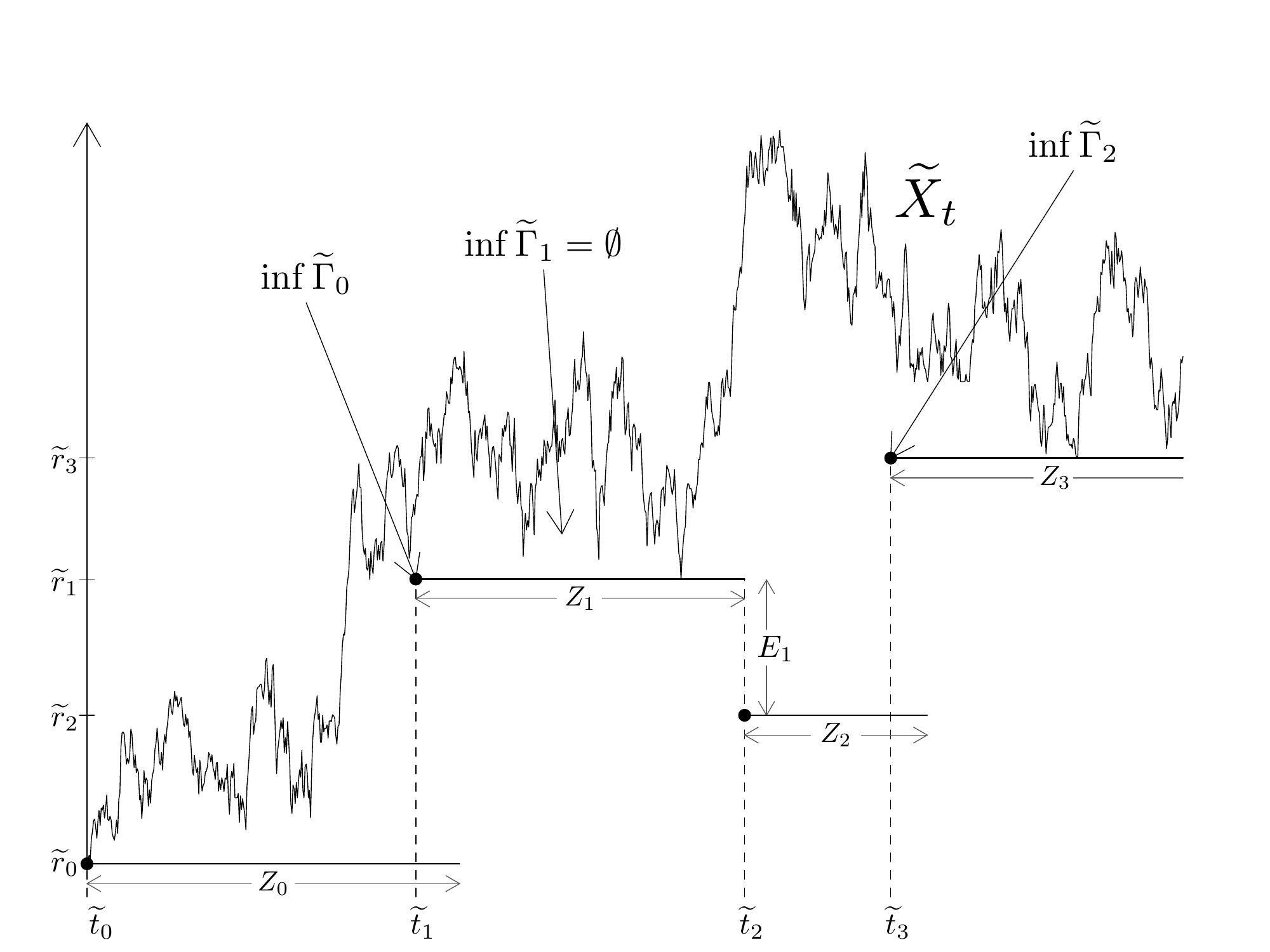}%
    \caption{\label{fig2} One realization of the graphical
      construction of Model~II. The path of $(\widetilde X_t)_{t\geq
        0}$ is reflected at the boundary $(\widetilde R_t)_{t\geq
        0}$. Random variables $Z_0, Z_1,...$ are independent and
      Exp$(\delta)$-distributed, while $E_0, E_1,...$ are independent
      and Exp$(\gamma/\delta)$-distributed. }
  \end{center}
\end{figure}

\begin{remark}[Motivation]
  Consider Model~I and assume $(\tau,r,z)$ is the active point by time
  $t$. Consider a time $t'>t$ when the active point changes and $t' =
  \tau+z$. As an approximation, we assume that the set \[\{r':
  (\tau',r',z')\in \mathcal{N}^{\gamma,\delta} \text{ for some
  }\tau',z'\in[0,\infty), \tau'\leq t'\leq \tau'+z'\}\] is in its
  stationary distribution which arises for $z\to\infty$, i.e.\ if the
  last active point stays for a long time. This stationary
  distribution is readily computed. It is clear that only points
  $r'<r$ are possible. Moreover, the probability that a point in $dr'$
  is in the set equals 
  \[ \int_0^\infty \gamma e^{-\delta z}dz dr' = \frac \gamma \delta dr'. \]
  So, the set is distributed according to a Poisson process with
  intensity measure $\tfrac \gamma\delta \ind_{(-\infty, r]}(r')
  \lambda(dr')$. As a consequence, if the active point vanishes, the
  reflection boundary jumps down an exponentially distributed amount
  with rate $\gamma/\delta$ in the approximate model. This leads to
  the following Model~II, which is illustrated in Figure~\ref{fig2}.
\end{remark}

\begin{definition}[\boldmath$\gamma/\delta$-Broken Brownian ratchet,
  Model~II]\label{modelII}
  Let $\widetilde {\mathcal{N}}^{\gamma}$ be a Poisson point process on
  $[0,\infty)\times \mathbb R$ with intensity measure $\gamma
  \lambda^2(dt,dx)$, conditioned on
  $\widetilde{\mathcal{N}}^{\gamma}(\{0\}\times \{0\})=1$, where $\gamma >0$
  and $\lambda^2$ denotes the Lebesgue measure on $[0,\infty)\times\mathbb
  R$. Moreover, $Z_0, Z_1,\dots$ are independent exponentially distributed
  with parameter $\delta\geq 0$ and $E_0, E_1,\dots$ are independent and
  exponentially distributed with parameter $\gamma/\delta$ (they are not
  needed if $\delta=0$). Let $(\widetilde B_t^0)_{t\geq 0}, (\widetilde 
  B_t^1)_{t\geq 0},\dots $ be a sequence of independent Brownian motions with
  $\widetilde B_0^0 = \widetilde B_0^1 = \dots =0$. Define $\widetilde t_0 =
  \widetilde r_0 =0$ and for $x_0 \ge 0$ set $\widetilde X_t :=
  |x_0+\widetilde B_t^0|$ for $\widetilde t_0\leq t < \widetilde t_1$, and
  \begin{equation}
    \begin{aligned}
      \widetilde\Gamma_n & := \{(\tau,r) \in \widetilde{\mathcal{N}}^{\gamma}:
      \widetilde t_n \leq  \tau< \widetilde t_n + Z_n, r_n\leq r < X_\tau\}, \\
      \inf \widetilde\Gamma_n & := (\tau,r) \text{ if } (\tau,r)\in
      \Gamma_n \text{ and }\tau = \inf \pi_1 \Gamma_n,
    \end{aligned}
  \end{equation}
  where $\pi_1$ is the projection on the first coordinate. Now, we set
  recursively for $n=0,1,2,...$
  \begin{equation}
    \label{eq:taun}
    \begin{aligned}
      \widetilde t_{n+1} & := (\widetilde t_n+Z_n) \wedge
      \inf\pi_1\widetilde \Gamma_n,\\
      \widetilde r_{n+1} & = \begin{cases} r & \text{ if } \widetilde
        \Gamma_n \neq\emptyset \text{ and }(\widetilde t_{n+1},r) =
        \inf\widetilde\Gamma_n, \\\widetilde r_n - E_n, & \text{ if 
        }\widetilde \Gamma_n = \emptyset. 
      \end{cases}
    \end{aligned}
  \end{equation}
  Finally,
  \begin{equation}
    \left.
      \begin{aligned}
        \widetilde R_t & := \widetilde r_n,\\
        \widetilde X_t & := \widetilde R_t + |\widetilde B_{t-\widetilde t_n}^k
        + \widetilde X_{\widetilde t_n-} - \widetilde R_t|
      \end{aligned}
    \right\} \text{ for $\widetilde t_n \leq t < \widetilde t_{n+1}$.} 
  \end{equation}
  We refer to the process $(\widetilde X_t, \widetilde R_t)_{t\geq 0}$
  as the \emph{$\gamma/\delta$-broken Brownian ratchet, Model~II} with initial
  value $(\widetilde X_0,\widetilde R_0)=(x_0,0)$.
\end{definition}

\begin{theorem}[Speed of the broken Brownian ratchet, Model~II]
  Let \label{T:speedII} $(\widetilde X_t,\widetilde R_t)_{t\geq 0}$ be
  a $\gamma/\delta$-broken Brownian ratchet, Model~II. Then,
  \[ \lim_{t\to\infty} \frac{\widetilde X_t}{t} =
  -\frac{A'(0)}{2A(0)}\] almost surely, where $A$ is the first
  coordinate of a solution of the system
  \begin{align}
    \label{eq:diff-system}
    \begin{split}
      A''(z) &= - 2\delta B(z) + 2\gamma z A(z), \\
      B'(z) & = -A'(z) - \frac\gamma{\delta} B(z)
    \end{split}
  \end{align}
  such that $A(0)=1/2$, $B(0) =0$ and $A$ is strictly decreasing with
  $A(z) \to 0$ as $z \to \infty$.
\end{theorem}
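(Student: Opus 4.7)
The plan is to study the \emph{gap process} $Y_t := \widetilde X_t - \widetilde R_t$, a time-homogeneous Markov process on $[0,\infty)$ whose dynamics can be read off Definition~\ref{modelII}: it is Brownian motion reflected at $0$ between jumps of $(\widetilde R_t)$; at rate $\delta$ it jumps upwards by an amount $E\sim\mathrm{Exp}(\gamma/\delta)$ (dissociation of the active point); and at total rate $\gamma Y_t$ it jumps from the current value $y$ to $Uy$ with $U\sim\mathrm{Unif}(0,1)$ (a new binding, uniform above the current reflection boundary). Since $\widetilde X_t = \widetilde R_t + Y_t$, once $(Y_t)$ is shown to be stable, $Y_t/t\to 0$ and it suffices to compute $\widetilde R_t/t$.

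First I establish positive recurrence and ergodicity of $(Y_t)$. The Lyapunov function $V(y)=y^2$ has extended generator
\begin{equation*}
\mathcal{L}V(y) \;=\; 1 + \tfrac{2\delta^{2}}{\gamma}y + \tfrac{2\delta^{3}}{\gamma^{2}} - \tfrac{2\gamma}{3}y^{3},
\end{equation*}
strongly negative for large $y$ owing to the binding-driven $y^{3}$ pull-down; together with the fact that reflected Brownian motion returns to $0$ in finite time, this yields a unique invariant probability measure $\pi$ with smooth density $p$ on $(0,\infty)$ and finite moments of all orders. Next I represent $\widetilde R_t$ as a semimartingale: the mean rate of change of $\widetilde R$ in state $y$ is
\begin{equation*}
\Psi(y) \;=\; \gamma y \cdot \tfrac{y}{2} - \delta \cdot \tfrac{\delta}{\gamma} \;=\; \tfrac{\gamma}{2}y^{2} - \tfrac{\delta^{2}}{\gamma}
\end{equation*}
(upward uniform jumps of mean $y/2$ from binding minus downward $\mathrm{Exp}(\gamma/\delta)$ jumps from dissociation). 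The martingale part $M_t := \widetilde R_t - \int_{0}^{t}\Psi(Y_s)\,ds$ satisfies $\mathbb{E}\langle M\rangle_t = O(t)$ by a direct compensator calculation, so $M_t/t\to 0$ a.s., and combined with the ergodic theorem this gives
\begin{equation*}
\lim_{t\to\infty}\frac{\widetilde X_t}{t} \;=\; \int \Psi\,d\pi \;=\; \tfrac{\gamma}{2}\mathbb{E}_{\pi}[Y^{2}] - \tfrac{\delta^{2}}{\gamma} \qquad\text{a.s.}
\end{equation*}

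The final step identifies the limit with $-A'(0)/(2A(0))$ through the stationary density. The Fokker-Planck equation for $p$ is
\begin{equation*}
\tfrac{1}{2}p''(y) - (\gamma y+\delta)p(y) + \gamma\!\int_{y}^{\infty}\! p(z)\,dz + \gamma\!\int_{0}^{y}\! p(u)\,e^{-\gamma(y-u)/\delta}\,du \;=\; 0,
\end{equation*}
with boundary condition $p'(0)=0$ (obtained by integrating once over $[0,\infty)$). Setting $A(y) := \tfrac{1}{2}\int_{y}^{\infty} p(z)\,dz$ and $B(y) := \tfrac{1}{2}\int_{0}^{y} p(u)\,e^{-\gamma(y-u)/\delta}\,du$ one reads off $A(0)=1/2$, $B(0)=0$, $A(z)\downarrow 0$, and $B'=-A'-(\gamma/\delta)B$ by direct differentiation; the first ODE $A''=-2\delta B+2\gamma z A$ follows by integrating the Fokker-Planck equation once over $[y,\infty)$ and simplifying via the identity $\delta(F+G)=\gamma\int_{y}^{\infty} G$ (with $F=2A$, $G=2B$), itself obtained by noting that the derivative of its LHS minus its RHS vanishes and $G(\infty)=0$ (by the super-exponential tail of $p$). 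A separate moment identity — multiply the Fokker-Planck equation by $y$ and integrate — yields $p(0)/2 = \tfrac{\gamma}{2}\mathbb{E}_{\pi}[Y^{2}] - \delta^{2}/\gamma$, so the ergodic limit is exactly $p(0)/2 = -A'(0)/(2A(0))$. Uniqueness of the ODE solution with the stated decay is standard: among solutions with $A(0)=1/2$, $B(0)=0$ only a one-dimensional choice of $A'(0)$ remains bounded, and among those only the decaying branch.

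The main technical obstacle is the first step: verifying positive recurrence and the martingale law of large numbers for a Markov process whose jump rate ($\gamma Y$) and jump sizes ($\mathrm{Exp}(\gamma/\delta)$ upwards and down to $\mathrm{Unif}(0,Y)$) are both unbounded, while controlling enough moments of $\pi$ to legitimise the computation of $\int\Psi\,d\pi$. By contrast, the ODE manipulations and the algebraic identification of $p(0)/2$ with $-A'(0)/(2A(0))$ are essentially automatic once stability and smoothness of $\pi$ are in hand.
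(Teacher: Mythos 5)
Your proposal is correct in substance but takes a genuinely different route from the paper. The paper constructs the discrete-time Markov chain $(Y_n,W_n,\eta_n)$ at jump times of the reflection boundary, proves existence and uniqueness of its invariant law (Propositions~\ref{P4}, \ref{P5}) via Lyapunov and coupling arguments, identifies the speed as $\mathbb{E}[W]/\mathbb{E}[\eta]$ through the cumulative-process LLN, and then expresses $f_Y$, $A$, $B$ through the Green function of Brownian motion killed at rate $\gamma x+\delta$; the ODE~\eqref{eq:diff-system} then falls out of Airy-function identities. You instead work with the continuous-time gap process $Y_t = \widetilde X_t - \widetilde R_t$, invoke a Lyapunov bound on $V(y)=y^2$ to get a stationary density $p$, decompose $\widetilde R_t$ as drift plus martingale, and recover $A,B$ as $A=\tfrac12\int_y^\infty p$, $B=\tfrac12\int_0^y p(u)e^{-\gamma(y-u)/\delta}du$ directly from the Fokker--Planck equation, bypassing the Green function entirely. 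I checked the computations: $\mathcal L V$, the drift $\Psi(y)=\tfrac{\gamma}{2}y^2-\tfrac{\delta^2}{\gamma}$, the Fokker--Planck balance, the identity $\delta(A+B)=\gamma\int_y^\infty B$, and the moment identity $p(0)/2=\tfrac{\gamma}{2}\mathbb{E}_\pi[Y^2]-\delta^2/\gamma$ are all correct and do reproduce the stated answer $-A'(0)/(2A(0))$, since $A(0)=1/2$ and $A'(0)=-p(0)/2$. Your $A$ differs from the paper's by a normalisation constant, which (as the paper also notes) is irrelevant to the ratio. The payoff of your route is conceptual clarity: the stationary Fokker--Planck equation for a jump-diffusion is more transparent than the Green-function bookkeeping of the original.

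Two caveats. First, you flag the main technical gap yourself, but the martingale LLN as written is not quite enough: $\mathbb{E}\langle M\rangle_t=O(t)$ gives $M_t/t\to 0$ in $L^2$, not almost surely. The standard fix is to note $\langle M\rangle_t=\int_0^t h(Y_s)\,ds$ for an explicit $\pi$-integrable $h$, so by the ergodic theorem $\langle M\rangle_t/t$ converges a.s.\ to a constant, and then $M_t/t=(M_t/\langle M\rangle_t)\cdot(\langle M\rangle_t/t)\to 0$ a.s.\ by the strong law for martingales once $\langle M\rangle_\infty=\infty$ a.s.; make this explicit. Likewise, ergodicity of $Y_t$ (unbounded jump rates, unbounded jump sizes, need for a smooth density and polynomial moments of $\pi$) is the nontrivial part and deserves a genuine Meyn--Tweedie-style argument rather than a sketch --- you say as much, and I agree this is where the work lies. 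Second, and unacknowledged: you assert uniqueness of the solution $(A,B)$ of~\eqref{eq:diff-system} with the prescribed boundary and decay conditions as ``standard.'' The paper does not claim this; in fact it explicitly states (Remark~\ref{rem:26}, part 2) that uniqueness is only a conjecture supported by numerics. Either supply a proof or drop the uniqueness claim and phrase the conclusion as the paper does --- that the speed equals $-A'(0)/(2A(0))$ for the particular $A$ you have constructed from $p$, which is \emph{a} solution of the system.
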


\begin{remark}[Simulations, uniqueness of \boldmath$A$, and the case
  $\delta=0$]\label{rem:26}
  \begin{enumerate}
  \item Since Model~II is only a convenient approximation of Model~I,
    we use simulations to see differences in the speed of Model~I
    and~II. By scaling properties of both models (see
    Propositions~\ref{P:scaleI} and~\ref{P:scaleII}), we require
    simulations only for a single parameter $\gamma$.  As can be seen
    in Figure~\ref{fig3}, the speed of both models is almost the
    same. The fact that Model~II is faster for low $\delta$ can be
    explained: we assume that the number of possible reflection
    boundaries below the currently active one is in equilibrium, which
    means that these are more than for Model~I, where the equilibrium
    is not yet attained. Since more reflection boundaries mean that
    the broken Brownian ratchet moves faster, the speed of Model~II is
    higher. For high values of $\delta$, Model~I shows a higher speed
    in our simulations. The reason is that we fixed the reflection
    boundary at 0 in our simulations of Model~I, and hence the
    simulations overstimate the speed of protein translocation.
  \item For a proper use of Theorem~\ref{T:speedII} in
    Figure~\ref{fig3}, we need to solve the
    system~\eqref{eq:diff-system}. We searched numerically for a
    solution by trying out various values for $A'(0)$ and determined
    which value approximately leads to $A(z)\to 0$ as $z\to
    \infty$. In our numerical analysis, we found only a single value
    of $A'(0)$ with this property, for all $\delta$. Hence, we
    strongly conjecture there is a unique solution of the
    system~\eqref{eq:diff-system} satisfying $A(0)=1/2$, $B(0)=0$ and
    $A(z)\to 0$ as $z\to\infty$.
  \item In the case $\delta=0$, the first equation
    in~\eqref{eq:diff-system} reads $A''(z)-2\gamma zA(z)=0$. The only
    solution with the required boundary values is given by $z\mapsto
    Ai((2\gamma)^{1/3} z)/(2Ai(0))$, where $Ai$ is the Airy function
    (i.e.\ solution of $u''(z) - zu(z)=0$) going to 0 as
    $z\to\infty$. By well-known properties of $Ai$ (see e.g.\
    \citealp{AbramowitzStegun:1972}), the speed is thus given by
    \begin{equation}
      \label{eq:Cgamma}
      \begin{aligned}
        \lim_{t\to\infty} \frac{X_t}{t} & = - \frac{Ai'(0)}{2Ai(0)}
        (2\gamma)^{1/3} 
        = \frac{\Gamma(2/3)}{\Gamma(1/3)}
        \Big(\frac{3\gamma}{4}\Big)^{1/3} \approx 0.36 \cdot
        (2\gamma)^{1/3},
      \end{aligned}
    \end{equation}
    a result known from Theorem~1 in \cite{depper_pfaffel:2010}.
  \end{enumerate}
\end{remark}

\begin{figure}[htb]
  \begin{center}
    \includegraphics[width=8.5cm]{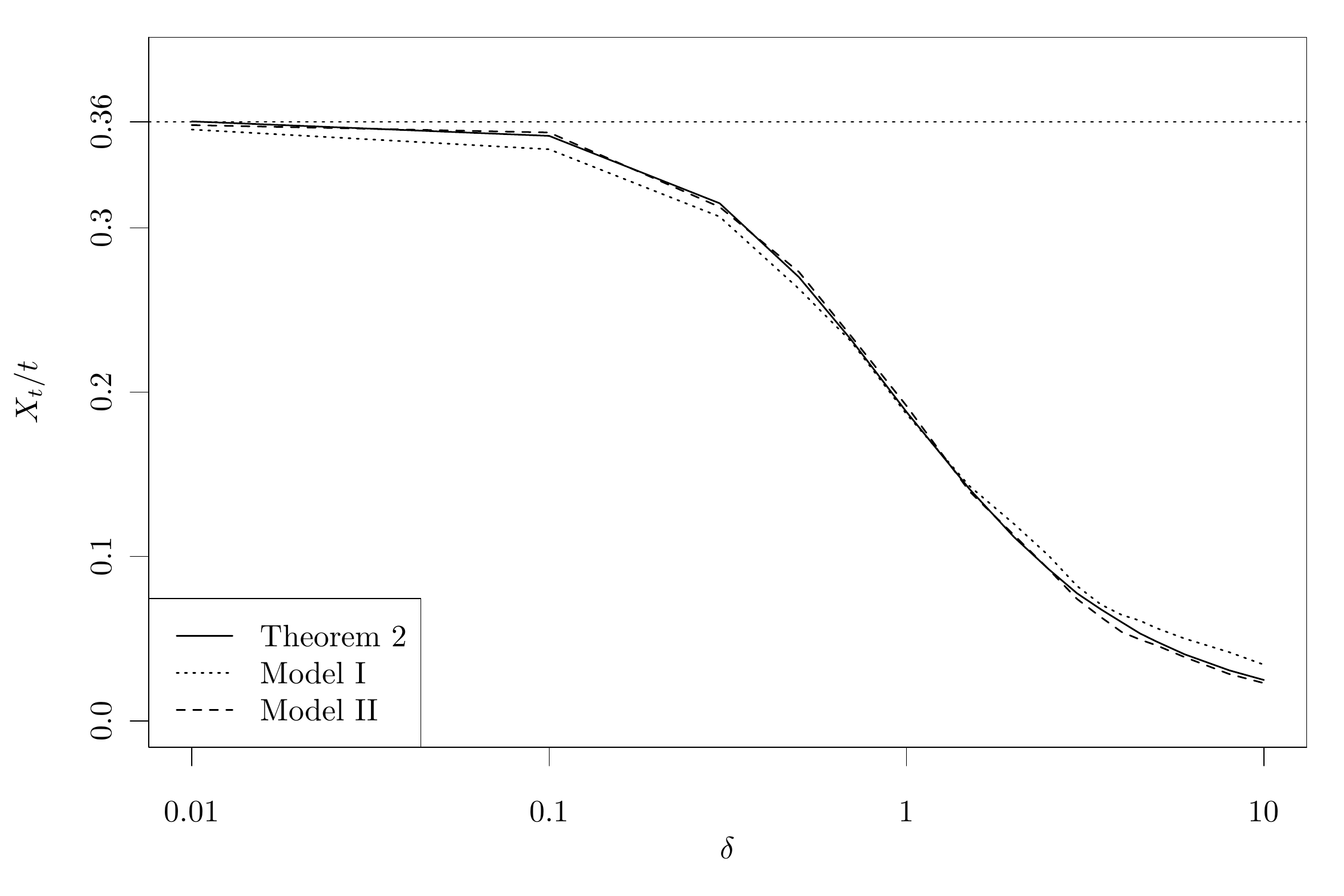}%
    \caption{\label{fig3} A simulation study for the speed of the broken
      Brownian ratchet, Model~I and Model~II. The parameter $\gamma=\tfrac 12$
      is fixed. See also Remark~\ref{rem:26}. The horizontal line at 0.36 represents the 
      speed in the case $\delta=0$ from~\eqref{eq:Cgamma}.
    }
  \end{center}
\end{figure}

\section{Proofs: Model~I}
In this section, we prove Theorem~\ref{T:speedI}. Three ingredients
are needed: First, a scaling property for the $\gamma/\delta$-broken
Brownian ratchet, Model~I, is obtained in
Section~\ref{sub:scaleI}. Second, an upper bound for the speed can
trivially be obtained by setting $\delta=0$ and using results from
\cite{depper_pfaffel:2010}. Third, a lower bound is obtained by
thinning Model~I, which we do in Section~\ref{sub:thinning}. We
conclude the proof in Section~\ref{sub:proofI}.

\subsection{Scaling property}
\label{sub:scaleI}
Here we obtain a scaling property of the $\gamma/\delta$-broken Brownian
ratchet, Model~I, which is based on a rescaling of space and time in
Definition~\ref{modelI}.

\begin{proposition}[Scaling property, Model~I]\label{P:scaleI}
  Let $(X_t^{\gamma;\delta}, {R}_t^{\gamma;\delta})_{t\geq 0}$ be the
  $\gamma/\delta$-broken Brownian ratchet, Model~I, with $\gamma>0$,
  $\delta\geq 0$ and initial value $(0,0)$. Then 
  \begin{align}\label{skalierung}
    (X_t^{\gamma;\delta}, R_t^{\gamma;\delta})_{t\geq 0}\overset{d}{=}
    \gamma^{-\frac{1}{3}}\left(X_{\gamma^{2/3}t}^{1;\delta/\gamma^{2/3}},
      R_{\gamma^{2/3}t}^{1;\delta/\gamma^{2/3}}\right)_{t\geq 0},
  \end{align}
  where \, $\stackrel d =$ \,  denotes equality in distribution.
\end{proposition}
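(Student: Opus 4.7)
The plan is to exhibit a coupling by rescaling the driving data of a $(1,\delta/\gamma^{2/3})$-broken Brownian ratchet and recognising the resulting process as a $(\gamma,\delta)$-broken Brownian ratchet. Define the space-time map $\Phi(\tau,r,z):=(\gamma^{-2/3}\tau,\gamma^{-1/3}r,\gamma^{-2/3}z)$ acting on Poisson points, together with the path rescaling $Y_t:=\gamma^{-1/3}X^{1;\delta/\gamma^{2/3}}_{\gamma^{2/3}t}$ and $S_t:=\gamma^{-1/3}R^{1;\delta/\gamma^{2/3}}_{\gamma^{2/3}t}$. The claim \eqref{skalierung} becomes $(Y_t,S_t)_{t\geq 0}\stackrel{d}{=}(X^{\gamma;\delta}_t,R^{\gamma;\delta}_t)_{t\geq 0}$, and I would establish it by showing that $(Y,S)$ is produced by Definition~\ref{modelI} applied to rescaled drivers whose joint law matches that of the $(\gamma,\delta)$-drivers.

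First I would check the drivers. By Brownian self-similarity, $\widetilde B^n_t:=\gamma^{-1/3}B^n_{\gamma^{2/3}t}$ is again a standard Brownian motion and independence within the family is preserved. For the Poisson part, a change-of-variables applied to the intensity $(\delta/\gamma^{2/3})e^{-(\delta/\gamma^{2/3})z}\lambda^3(d\tau,dr,dz)$ under $\Phi$ yields Jacobian factor $\gamma^{5/3}$ while the exponent becomes $-\delta z'$; multiplying out gives image intensity $\gamma\delta e^{-\delta z'}\lambda^3(d\tau',dr',dz')$ as required for $\mathcal{N}^{\gamma,\delta}$. The Palm-type conditioning on a point at $(0,0)$ transports under $\Phi$ because the $z$-mark is Exp$(\delta/\gamma^{2/3})$ in the $(1,\delta/\gamma^{2/3})$-picture and $\gamma^{-2/3}\cdot\mathrm{Exp}(\delta/\gamma^{2/3})\stackrel{d}{=}\mathrm{Exp}(\delta)$, the correct marginal.

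Next I would check that the recursion \eqref{eq:taun1} is equivariant under $\Phi$. Setting $\widetilde t_n:=\gamma^{-2/3}t_n$ and $(\widetilde\tau_n,\widetilde r_n,\widetilde z_n):=\Phi(\tau_n,r_n,z_n)$, the two defining inequalities $t_n\leq\tau<\tau_n+z_n$ and $r_n\leq r<X^{1;\delta/\gamma^{2/3}}_\tau$ translate cleanly to the $Y$-picture since time coordinates rescale by $\gamma^{-2/3}$, space coordinates by $\gamma^{-1/3}$, and $X^{1;\delta/\gamma^{2/3}}_\tau=\gamma^{1/3}Y_{\gamma^{-2/3}\tau}$. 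Hence $\Phi$ sends $\Gamma_n$ to the analogous set in the $Y$-picture and the recursion reproduces the correct $(\widetilde t_{n+1},\widetilde\tau_{n+1},\widetilde r_{n+1},\widetilde z_{n+1})$. Finally, the reflected Brownian segment $\gamma^{-1/3}|B^n_{t-t_n}+X^{1;\delta/\gamma^{2/3}}_{t_n-}-R^{1;\delta/\gamma^{2/3}}_t|$ equals $|\widetilde B^n_{s-\widetilde t_n}+Y_{\widetilde t_n-}-S_s|$ for $s=\gamma^{-2/3}t$, because Brownian scaling uses the same exponent $1/3$ as the spatial rescaling. Thus $(Y,S)$ is the Definition~\ref{modelI} functional applied to data distributed as the $(\gamma,\delta)$-drivers, which proves \eqref{skalierung}.

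There is no conceptual obstacle beyond matching exponents. Brownian self-similarity forces the time rescaling to be the square of the space rescaling, while the Poisson intensity transforms correctly only if the Jacobian of $\Phi$ absorbs the rate change from $\delta/\gamma^{2/3}$ to $\delta$; both constraints are simultaneously satisfied by the $\gamma^{1/3}$-space and $\gamma^{2/3}$-time scaling used here, and this is precisely the source of the $\gamma^{1/3}$-prefactor appearing in Theorem~\ref{T:speedI}.
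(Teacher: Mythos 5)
Your proof is correct and follows essentially the same approach as the paper: rescale space, time, and the exponential mark of the Poisson process, verify the pushforward of the Poisson intensity and the Brownian scaling match, and note that Definition~\ref{modelI} is equivariant under this rescaling. The paper applies the rescaling $g_\gamma:(t,x,z)\mapsto(\gamma^{2/3}t,\gamma^{1/3}x,\gamma^{2/3}z)$ to go from the $(\gamma,\delta)$ data to the $(1,\delta/\gamma^{2/3})$ data, while you use the inverse map $\Phi=g_\gamma^{-1}$; this is a cosmetic difference in direction, not a different argument.
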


\begin{proof}
  \sloppy Using the notation from Definition~\ref{modelI}, we need to
  understand what (\ref{skalierung}) means for the Poisson process
  $\mathcal{N}^{\gamma,\delta}$ and for the Brownian motions
  $(B_t^0)_{t\geq 0}, (B_t^1)_{t\geq 0},\dots$  We use the linear
  rescaling of time and space
  \begin{align*}
    g_\gamma: \begin{cases} {\mathbb R}^3 & \to {\mathbb R}^3 \\
      (t,x,z) & \mapsto
      (\gamma^{2/3}t,\gamma^{1/3}x,\gamma^{2/3}z).\end{cases}
  \end{align*}
  \sloppy and obtain $g_\gamma(\mathcal N^{\gamma,\delta}) \stackrel d
  = \mathcal N^{1,\delta/\gamma^{2/3}}$. Moreover, we have that
  $(\sqrt cB^k_t)_{t\geq 0} \overset d = \big(B^k_{ct}\big)_{t\geq 0}$
  for all $c>0$ and $k=0,1,2,\dots$ and so, if $\widetilde g_\gamma:
  \mathbb R^2 \to\mathbb R^2$ consists of the first two coordinates of
  $g_\gamma$,
  \begin{equation}
    \label{eq:BB1}
    \begin{aligned}
      \widetilde g_\gamma\left((t,{B}_t^k)_{t\geq 0}\right)&=
      \left(\big(\gamma^{2/3}t,\gamma^{1/3}{B}_t^k\big)_{t\geq 0}\right)\\
      &\overset{d}{=}\left(\big(\gamma^{2/3}t,{B}^k_{\gamma^{2/3}t}\big)_{t\geq
          0}\right)
      =\left(\big(s, B_s^k\big)_{s\geq 0}\right)
    \end{aligned}
  \end{equation}
  for $k=0,1,2,\dots$ where $s:=\gamma^{2/3}t$. Hence, the distribution
  of the Brownian motions are unaffected by the rescaling. We have
  shown that
  \[\gamma^{1/3}(X_t^{\gamma,\delta}, R_t^{\gamma,\delta})_{t\geq 0}\stackrel
  d =  (X_{\gamma^{2/3} t}^{1,\delta/\gamma^{2/3}},R_{\gamma^{2/3} 
    t}^{1,\delta/\gamma^{2/3}})_{t\geq 0}\] 
  and the assertion follows. \qed 
\end{proof}

\subsection{Lower bound}
\label{sub:thinning}
\sloppy Consider the possible reflection boundaries $[\tau, \tau+z)
\times \{r\}$ for all $(\tau,r,z) \in \mathcal{N}^{\gamma,\delta}$ in
Definition~\ref{modelI}. Clearly, the speed of the
$\gamma/\delta$-ratchet decreases if we take less possible reflection
boundaries, leading to a lower bound for the speed of the broken
ratchet. We will use the following thinned version of Model~I. Note
that the formal difference to Model~I is the restriction
$\sup\Sigma_{n+1}\leq\tau$ in \eqref{eq:widehattaun}. An illustration
can be found in Figure~\ref{fig4}.

\begin{definition}[\boldmath$\gamma/\delta$-broken Brownian ratchet, thinned
  Model~I]
  Consider the same probability space as in Definition~\ref{modelI},
  with the same $\mathcal{N}^{\gamma, \delta}, \mathcal{N}^\gamma,
  (B_t^0)_{t\geq 0}, (B_t^1)_{t\geq 0}, (B_t^2)_{t\geq 0},\dots$
  Again, we define recursively times $\widehat t_0, \widehat
  t_1,\dots$ when the reflection boundary changes as well as triples
  $(\widehat \tau_0, \widehat r_0, \widehat z_0), (\widehat \tau_1,
  \widehat r_1, \widehat z_1), (\widehat \tau_2, \widehat r_2,
  \widehat z_2),\dots\in \mathcal{N}^{\gamma,\delta}$ such that
  $\widehat r_n$ is the reflection boundary in the interval $[\widehat
  t_n, \widehat t_{n+1})$. Define $\Sigma_{0} := \emptyset$, $\widehat
  t_0 = \widehat \tau_0 = \widehat r_0 =0$, $\widehat z_0:=z$ if
  $(0,0,z) \in \mathcal{N}^{\gamma,\delta}$ and for $x_0 \geq 0$ set 
  $\widehat X_t := |x_0 + B_t^0|$ for $\widehat t_0\leq t < 
  \widehat t_1$, and
  \begin{equation}
    \begin{aligned}
      \widehat \Gamma_n & := \{(\tau,r,z) \in \mathcal{N}^{\gamma,\delta}:
      \widehat t_n \leq
      \tau< \widehat \tau_n + \widehat z_n, \widehat r_n\leq r < \widehat
      X_\tau\} \\ 
      \inf \widehat \Gamma_n & := (\tau,r,z) \text{ if } (\tau,r,z)\in
      \widehat \Gamma_n \text{ and }\tau = \inf \pi_1 \widehat \Gamma_n
    \end{aligned}
  \end{equation}
  where $\pi_1$ is the projection on the first coordinate. Now, we set
  recursively for $n=0,1,2,\dots$
  \begin{equation}
    \label{eq:widehattaun}
    \begin{aligned}
      \widehat t_{n+1} & := (\widehat\tau_n + \widehat z_n)\wedge
      \inf\pi_1\widehat\Gamma_n,\\
      \Sigma_{n+1} & := \Sigma_{n} \cup \{\inf \{t\in [\widehat t_{n},
      \widehat t_{n+1}):
      \widehat X_t=\widehat R_t\}\} \\
      (\widehat \tau_{n+1}, & \, \widehat r_{n+1}, \widehat z_{n+1})
      \\ & :=
      \begin{cases} \inf \widehat \Gamma_n, &
        \!\!\text{if }\widehat \Gamma_n\not=\emptyset \\[1ex]
        (\tau,r,z) & \!\!\text{if }\widehat \Gamma_n=\emptyset \text{
          and } \\ & \quad r = \max\{r\leq \widehat X_\tau: \sup
        \Sigma_{n+1} \leq \tau \leq \widehat \tau_n + \widehat z_n <
        \tau+z\}.
      \end{cases}\\
    \end{aligned}
  \end{equation}
  Finally,
  \begin{equation}
    \left.
      \begin{aligned}
        \widehat R_t & := \widehat r_n,\\
        \widehat X_t & := \widehat R_t + |B_{t - \widehat t_n}^k +
        \widehat X_{\widehat t_n-} - \widehat R_t|
      \end{aligned}
    \right\} \text{ for $\widehat t_n \leq t < \widehat t_{n+1}$.} 
  \end{equation}
  We refer to the process $(\widehat X_t, \widehat R_t)_{t\geq 0}$ as
  the \emph{$\gamma/\delta$-broken Brownian ratchet, thinned Model~I} with initial
  value $(\widehat X_0, \widehat R_0)=(x_0,0)$. 
\end{definition}

\begin{figure}[htb]
  \begin{center}
    \includegraphics[width=9cm]{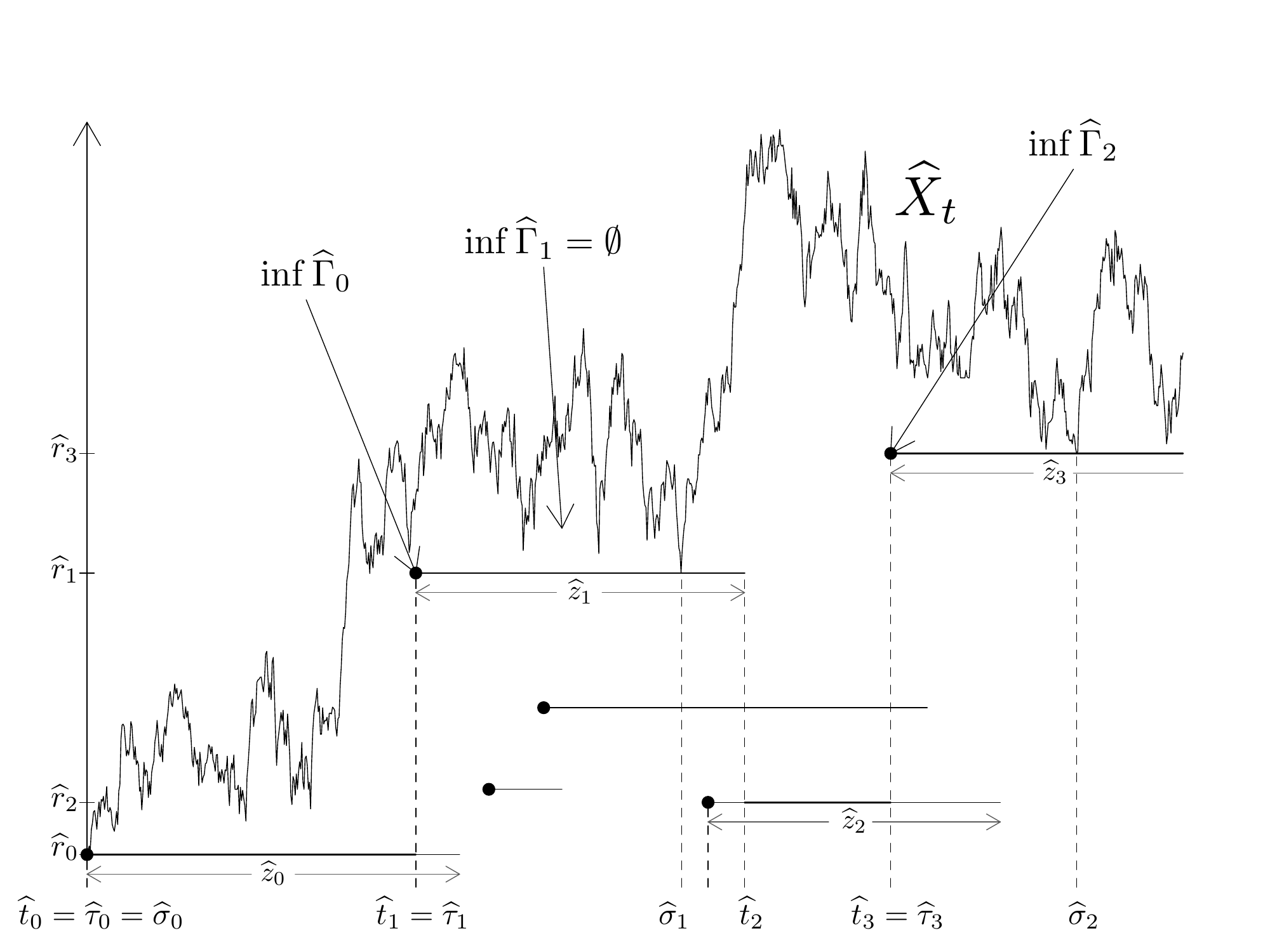}%
    \caption{\label{fig4} One realization of the graphical
      construction of thinned Model~I. In contrast to Model~I, there
      is the set of times $\Sigma_\infty = \{\widehat \sigma_0,
      \widehat \sigma_1, \widehat\sigma_2,...\}$ which serve as
      renewal points for the process. For example, note that the
      reflection boundary between $\widehat t_2$ and $\widehat t_3$ is
      lower than in Figure~\ref{fig1}.}
  \end{center}
\end{figure}

\begin{remark}[Renewal times for the thinned Model~I]
  Again, $\widehat t_0, \widehat t_1, \widehat t_2, \dots$ are times
  when the reflection boundary changes for the thinned
  Model~I. Moreover, the set $\Sigma_n$ consists of all first times
  after changing the reflection boundary, when $X_t=R_t$, up to
  $\widehat t_{n}$. Clearly, the additional restriction $\sup
  \Sigma_{n+1} \leq \tau$ in the definition of $(\widehat t_{n+1},
  \widehat \tau_{n+1}, \widehat z_{n+1})$ in~\eqref{eq:widehattaun}
  leads to taking less Poisson points and hence $\widehat R_t \leq
  R_t$ and $\widehat X_t\leq X_t$ for all $t\geq 0$, almost
  surely. Moreover, the times $\Sigma_\infty := \bigcup_{n=0}^\infty
  \Sigma_n$ are renewal times for the process $(\widehat X_t-\widehat
  R_t)_{t\geq 0}$: We have $\widehat X_t - \widehat R_t = 0$ for
  $t\in\Sigma_\infty$ and after time $t$, any reflection boundary $r$
  with $(\tau,r,z)\in \mathcal{N}^{\gamma,\delta}$ must fulfill
  $\tau\geq t$ in the thinned Model~I. In other words, the possible
  Poisson points for the reflection boundary before and after time $t$
  are distinct, so the distributions of $(\widehat X_s - \widehat
  R_s)_{0\leq s < t}$ and $(\widehat X_s - \widehat R_s)_{s\geq t}$ 
  are independent.
\end{remark}

\noindent
Using the renewal structure of the thinned Model~I allows us to define
a cumulative process.

\begin{lemma}[Renewal structure]
  Let $(\widehat X_t, \widehat R_t)_{t\geq 0}$ be a broken Brownian
  ratchet, thinned Model~I, and $\Sigma_\infty = \{\widehat \sigma_0,
  \widehat\sigma_1,\dots\}$. Then, $(\widehat\sigma_n -
  \widehat\sigma_{n-1},\widehat X_{\widehat\sigma_n} - \widehat
  X_{\widehat\sigma_{n-1}})_{n=1,2,\dots}$ is a sequence of bivariate
  iid random variables.
\end{lemma}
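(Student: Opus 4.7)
The plan is to establish the lemma through a renewal/strong-Markov argument. Let $(\mathcal{F}_t)_{t\geq 0}$ be the filtration generated by the restriction of $\mathcal{N}^{\gamma,\delta}$ to $[0,t]\times\mathbb{R}\times[0,\infty)$ together with the Brownian motions $B^0,B^1,\dots$ indexed by epochs $[\widehat t_k,\widehat t_{k+1})$ up to time $t$. First I would verify by induction on the recursive construction in~\eqref{eq:widehattaun} that $\widehat t_n$ and the elements of $\Sigma_\infty$ are $(\mathcal{F}_t)$-stopping times, and in particular that each $\widehat\sigma_n$ is a stopping time. The key structural fact, already highlighted in the remark preceding the lemma, is that the extra restriction $\sup\Sigma_{n+1}\leq \tau$ forces every Poisson point $(\tau,r,z)$ ever used as a reflection boundary after $\widehat\sigma_n$ to satisfy $\tau\geq \widehat\sigma_n$; hence the evolution of $(\widehat X_t,\widehat R_t)$ for $t\geq\widehat\sigma_n$ depends on $\mathcal{N}^{\gamma,\delta}$ only through its restriction to $[\widehat\sigma_n,\infty)\times\mathbb{R}\times[0,\infty)$ and on the Brownian motions only through their increments after $\widehat\sigma_n$.

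Next I would apply the strong Markov property for Poisson point processes: the shifted point process $\{(\tau-\widehat\sigma_n,\,r-\widehat R_{\widehat\sigma_n},\,z):(\tau,r,z)\in\mathcal{N}^{\gamma,\delta},\,\tau\geq\widehat\sigma_n\}$ is independent of $\mathcal{F}_{\widehat\sigma_n}$ and is itself a Poisson process with the same intensity $\gamma\delta e^{-\delta z}\lambda^3$ (here the translation invariance of the intensity in the first two coordinates is used). The strong Markov property of Brownian motion likewise provides a Brownian motion starting at the origin after $\widehat\sigma_n$, independent of $\mathcal{F}_{\widehat\sigma_n}$. By definition of $\Sigma_\infty$ we have $\widehat X_{\widehat\sigma_n}=\widehat R_{\widehat\sigma_n}$, so that after translating time by $\widehat\sigma_n$ and space by $\widehat R_{\widehat\sigma_n}$ the recursive construction reproduces a broken Brownian ratchet, thinned Model~I, with initial value $(0,0)$, driven by the shifted Poisson process and the fresh Brownian motion. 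Iterating over $n=1,2,\dots$ yields that each pair $(\widehat\sigma_n-\widehat\sigma_{n-1},\,\widehat X_{\widehat\sigma_n}-\widehat X_{\widehat\sigma_{n-1}})$ is a measurable functional of the $n$th shifted Poisson process and a fresh Brownian motion, and is independent of $\mathcal{F}_{\widehat\sigma_{n-1}}$ with common distribution equal to the law of $(\widehat\sigma_1,\widehat X_{\widehat\sigma_1})$ under initial value $(0,0)$, giving the claimed iid property.

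The main obstacle is the careful bookkeeping for the strong Markov step: one needs to check that $\widehat\sigma_n<\infty$ almost surely so that the sequence is well-defined (this follows because a reflected Brownian motion almost surely returns to its reflection boundary in finite time, so the first return to $\widehat R$ after any $\widehat t_k$ is finite), and one must verify that the conditioning $\mathcal{N}^{\gamma,\delta}(\{0\}\times\{0\}\times[0,\infty))=1$ built into the initial process does not resurface after the first renewal — this is immediate since the special Poisson point at the origin is consumed by $\widehat r_0=0$ in the zeroth epoch and plays no role after time $\widehat\sigma_1\geq \widehat t_1$.
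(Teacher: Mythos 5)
Your argument follows essentially the same route as the paper's own (much terser) proof: exploit the constraint $\sup\Sigma_{n+1}\leq\tau$ in \eqref{eq:widehattaun} to conclude that the Poisson points chosen as reflection boundaries after $\widehat\sigma_{n}$ all have birth time $\tau\geq\widehat\sigma_n$, and then invoke the independence of the Poisson configuration over disjoint time strips together with the strong Markov property of Brownian motion. Your write-up is more explicit about the filtration and the stopping-time verification, which is helpful.

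However, there is one point that your proposal states too strongly and that the paper also glosses over. You claim that ``the evolution of $(\widehat X_t,\widehat R_t)$ for $t\geq\widehat\sigma_n$ depends on $\mathcal{N}^{\gamma,\delta}$ only through its restriction to $[\widehat\sigma_n,\infty)\times\mathbb{R}\times[0,\infty)$.'' This is not literally true when $\delta>0$: the ratcheting molecule that is \emph{active} at time $\widehat\sigma_n$ was born at some $\widehat\tau_m<\widehat\sigma_n$ (with position $\widehat r_m=\widehat R_{\widehat\sigma_n}$), and its residual lifetime $\widehat\tau_m+\widehat z_m-\widehat\sigma_n$ governs when the reflection boundary next drops, hence feeds into the increment $(\widehat\sigma_{n+1}-\widehat\sigma_n,\widehat X_{\widehat\sigma_{n+1}}-\widehat X_{\widehat\sigma_n})$. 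With your filtration — which reveals the full $z$-coordinate of every point as soon as it is born — this residual lifetime is $\mathcal{F}_{\widehat\sigma_n}$-measurable, so the independence from $\mathcal{F}_{\widehat\sigma_n}$ does not hold as stated. The standard repair is to work with the coarser filtration in which the $z$-coordinate of a point is revealed only at its dissociation time $\tau+z$ (equivalently, at time $t$ one only knows whether $\tau+z>t$); then the residual lifetime of the active molecule at $\widehat\sigma_n$ is $\mathrm{Exp}(\delta)$ and independent of the past by the memoryless property, and the iid conclusion follows exactly along the lines you lay out. You should also then re-check the stopping-time property of $\widehat\sigma_n$ with respect to this coarser filtration; it holds because $\widehat\sigma_n$ is a first hitting time of the reflection boundary and does not require knowing future dissociation times.
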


\begin{proof}
  By the construction of the thinned Model~I, possible reflection
  boundaries $r$ with $(\tau,r,z) \in \mathcal{N}^{\gamma,\delta}$ are
  only used in one interval $[\widehat\sigma_0, \widehat\sigma_1),
  [\widehat\sigma_1, \widehat\sigma_2), [\widehat\sigma_2,
  \widehat\sigma_3), \dots$ Hence, the Poisson points used for
  constructing $(\widehat\sigma_n - \widehat\sigma_{n-1},\widehat
  X_{\widehat\sigma_n} - \widehat X_{\widehat\sigma_{n-1}})_{n\geq 1}$
  arise independently, which  shows the result. \qed 
\end{proof}

\begin{remark}[The thinned Model~I as a cumulative
  process]\label{remark_LLN} \leavevmode \\ 
  Let $(\widehat X_t, \widehat R_t)_{t\geq 0}$ be the
  $\gamma/\delta$-Brownian ratchet, thinned Model~I. For
  $\Sigma_\infty = \{\widehat\sigma_0, \widehat\sigma_1,\dots\}$, set
  \begin{align}\label{cumul_process}
    M_t:=\min\{n:\widehat\sigma_n>t\},\quad
    S_n:=\sum_{i=1}^n{(\widehat X_{\widehat\sigma_i} - \widehat
      X_{\widehat\sigma_{i-1}})}, 
  \end{align}
  so that we have $\widehat X_t=S_{M_t}+A_t$, where
  \begin{align}
    \label{eq:cumul_proc_At}
    A_t:=\widehat X_{\widehat\sigma_0} + \widehat X_t - \widehat 
    X_{\widehat\sigma_{M_t}}. 
  \end{align}
  According to the definition of cumulative processes in \cite{Roginsky:1994},
  the thinned broken Brownian ratchet $\mathcal{X}$ is a type A cumulative
  process with remainder $A_t$. Note that finite first moments of
  $\widehat\sigma_1 - \widehat\sigma_0$ and
  $X_{\widehat\sigma_1}-X_{\widehat\sigma_0}$ are sufficient for the strong
  law of large numbers for $S_{M_t}$. We will see that a law of large numbers
  holds also for $(\widehat X_t)_{t\geq 0}$, as we will show in
  Proposition~\ref{remainder} that $A_t/t$ converges to 0 almost surely as
  $t\rightarrow\infty$.
\end{remark}

\begin{lemma}[Finite first moments of \boldmath$\sigma$]\label{sigma_bounded}
  Consider a thinned Model~I, started with $\widehat X_0=x\geq 0$ and
  \begin{align}\label{sigma_stop}
    \widehat\sigma_0:=\inf\{t \ge 0:\widehat X_t = \widehat R_t\},
  \end{align}
  that is, $\widehat\sigma_0$ is the first time when then broken Brownian
  ratchet hits the moving reflection boundary. Then, using $\mathbb
  E_x[\cdot] := \mathbb E[\cdot|X_0=x]$,
  \begin{align*}
    \sup_{x}{\mathbb{E}_x[\widehat\sigma_0]}<\infty.
  \end{align*}
\end{lemma}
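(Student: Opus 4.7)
The plan is to reduce to a bounded-hitting-time estimate on the distance process $D_t := \widehat X_t - \widehat R_t \geq 0$. By Proposition~\ref{P:scaleI} (whose proof goes through unchanged for the thinned model, since the rescaling $g_\gamma$ commutes with the constraint $\sup\Sigma_{n+1}\le\tau$), it suffices to take $\gamma=1$. Moreover, on the interval $[0,\widehat\sigma_0]$ the set $\Sigma_n$ is empty (no hit has yet occurred), so the thinning in \eqref{eq:widehattaun} is vacuous and $\widehat\sigma_0$ has the same law as the first-hit time $\sigma_0$ in the unthinned Model~I. The dynamics of $D_t$ are: between boundary-change events, reflected Brownian motion at $0$; at rate $\gamma D_t$ a new Poisson point lands uniformly in $(\widehat R_t,\widehat X_t)$, giving $D_t\mapsto UD_t$ with $U\sim\mathrm{Unif}(0,1)$; at rate $\delta$ the active molecule dissociates and $D_t\mapsto D_t+J$ for a non-negative $J$.

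In the case $\delta = 0$, $D_t$ is Markov and $f(D) := \mathbb E_D[\widehat\sigma_0]$ satisfies $\mathcal L f(D) = \tfrac{1}{2}f''(D) + \gamma\int_0^D f(u)\,du - \gamma Df(D) = -1$ with $f(0)=0$. Differentiating once, $g := f'$ satisfies the Airy equation $g''(D) = 2\gamma D\,g(D)$, whose bounded solution is $g(D) = c_1 Ai((2\gamma)^{1/3}D)$, giving $f(D) = c_1\int_0^D Ai((2\gamma)^{1/3}u)\,du$ with $c_1$ fixed by $f''(0)=-2$. Since $\int_0^\infty Ai(v)\,dv = 1/3$ and $Ai$ decays exponentially, $f$ is bounded above by $f(\infty) = c_1/(3(2\gamma)^{1/3}) < \infty$, so $\sup_x\mathbb E_x[\widehat\sigma_0]\le f(\infty)$ in this case.

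For $\delta>0$ the same $f$ no longer satisfies $\mathcal L f = -1$, because the extra upward-jump term $\delta(\mathbb E[f(D+J)]-f(D))\ge 0$ has the wrong sign. I would split the hitting time at a threshold $M\sim\delta/\gamma$ and bound the two pieces separately. On $\{D_t>M\}$, the downward jump rate $\gamma D$ dominates the upward rate $\delta$, and a logarithmic supermartingale argument — each downward jump decreases $\log D_t$ by $-\log U\sim\mathrm{Exp}(1)$ while each upward jump contributes only $O(\delta/M)$ — combined with a geometric sum over scales gives $\mathbb E_x[\tau_M]\le C/(\gamma M)$ uniformly in $x$, where $\tau_M:=\inf\{t:D_t\le M\}$. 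On $\{D_t\le M\}$ one invokes a renewal iteration: in each inter-event interval, whose length is stochastically bounded by $\mathrm{Exp}(\delta)$, the reflected Brownian excursion reaches $0$ with probability at least $e^{-M\sqrt{2(\gamma M+\delta)}}>0$ by the formula $\mathbb E[e^{-\lambda T_D}]=e^{-D\sqrt{2\lambda}}$, so the number of intervals needed is geometrically bounded, yielding $\sup_{y\le M}\mathbb E_y[\widehat\sigma_0]<\infty$. Combining the two bounds gives the claim. The principal obstacle is that $D_t$ is not Markov on its own under Model~I (the distribution of $J$ depends on the history of Poisson points below $\widehat R_t$), so the supermartingale and renewal steps must be run on the enlarged Markov state $(\widehat X_t,\widehat R_t,\mathcal R_t)$, or one stochastically dominates $J$ by the iid $\mathrm{Exp}(\gamma/\delta)$ increments of Model~II and works on the resulting Markovian surrogate.
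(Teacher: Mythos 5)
Your approach is genuinely different from the paper's and substantially more involved. The paper's proof is a short domination/coupling argument: it introduces an auxiliary process $(I_t)_{t\ge 0}$ with $H_t := \widehat X_t - \widehat R_t \le I_t$ on $[0,\widehat\sigma_0]$, where $I$ behaves as the same Brownian motion between jumps, resets to $1$ at total rate $\gamma$, and resets to $+\infty$ at rate $\delta$. The crucial design choice is that $I$ discards \emph{all} information about the size of upward jumps of $H$: on any dissociation event, $I$ simply returns to $\infty$. Then $\sigma_I := \inf\{t : I_t = 0\}$ dominates $\widehat\sigma_0$, and $\mathbb E[\sigma_I]$ is computed in one line as a geometric sum in terms of $p := \mathbb P(\sigma_B \le T)$, where $\sigma_B$ is the hitting time of $0$ by Brownian motion from $1$ and $T\sim\mathrm{Exp}(\gamma+\delta)$. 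This gives a bound independent of $x$ with essentially no analysis of the jump dynamics.

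The gap in your proposal sits exactly at the point the paper's coupling was built to sidestep: the distribution of the upward jump $J$. You correctly flag that $D_t=H_t$ is not Markov in Model~I and propose either enlarging the state or ``stochastically dominating $J$ by the iid $\mathrm{Exp}(\gamma/\delta)$ increments of Model~II.'' That domination goes the wrong way. Starting from a single bound molecule, the set of active Poisson points below the current boundary has intensity $\tfrac{\gamma}{\delta}(1-e^{-\delta(t-\cdot)})<\gamma/\delta$, so the gap to the next lower active point — i.e.\ $J$ — is stochastically \emph{larger}, not smaller, than $\mathrm{Exp}(\gamma/\delta)$. Since you need an \emph{upper} bound on $\mathbb E_x[\widehat\sigma_0]$, you would need an upper bound on $J$, so the Model~II surrogate cannot close the argument. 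Your other ingredients are reasonable: the reduction to $\gamma=1$, the observation that the thinning constraint in \eqref{eq:widehattaun} is vacuous before $\widehat\sigma_0$, and the $\delta=0$ Airy computation are all correct, and the threshold-$M$ split with a log-supermartingale above $M$ and a renewal iteration below $M$ is a coherent outline. But both the supermartingale drift estimate and the renewal step quantitatively depend on a handle on $J$ that you do not have. The paper's construction avoids the issue entirely by making the dominating process jump to $\infty$ on \emph{any} upward jump, regardless of its magnitude; if you want to keep your two-regime strategy, you should likewise replace ``$D_t\mapsto D_t+J$'' by the pessimistic ``$D_t\mapsto\infty$'' in the surrogate, at which point the renewal argument below $M$ essentially becomes the paper's proof.
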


\begin{proof}
  Set $ H_t := \widehat X_t - \widehat R_t$ and note that $\mathcal H
  = ( H_t)_{t\geq 0}$ locally behaves like Brownian motion, and has
  jump discontinuities which, by time $t$, either increase or decrease $ 
  H_{t-}$. First, $H_{t-}$ jumps to $h<H_{t-}$ at rate $\gamma dh$ by
  occurrence of a new Poisson point in the graphical construction at
  rate $\gamma$. Second, it jumps to some $h>H_{t-}$ at rate $\delta$
  since $t=\tau+z$ for the active Poisson point $(\tau,r,z)$ by time
  $t-$.  Note that $\widehat\sigma_0 := \inf\{t\geq 0: H_t=0\}$. We
  couple the process $( H_t)_{t\geq 0}$ to a process $(I_t)_{t\geq 0}$
  with $H_t\leq I_t$ for all $0\leq t\leq \widehat\sigma_0$, define
  $\sigma_I:= \inf\{t\geq 0: I_t=0\}$ and show that $\sup_{x}\mathbb
  E_x[\sigma_I]<\infty$.
  
  The process $\mathcal I = (I_t)_{t\geq 0}$ has the following
  dynamics: it behaves locally like the same Brownian motion as
  $\mathcal H$, but starts in $I_0=\infty$. It jumps to 1 at time $t$
  in the following cases: (i) if $\mathcal H$ jumps from $ H_{t-}>1$
  to $H_t\leq 1$ (ii) if $\mathcal H$ jumps from $ H_{t-}<1$ to $
  H_t\leq H_{t-}$ (iii) at an independent rate $\gamma(1- H_t)^+$. In
  total this gives a jump rate of $\gamma$ to jump to 1 and $ H_t\leq
  I_t$ at such a jump time. In addition, $\mathcal I$ jumps to
  $\infty$ if $\mathcal H$ jumps from $ H_{t-}$ to $ H_t> H_{t-}$,
  which occurs at rate $\delta$.
  
  Consider an exponentially distributed random variable $T$ with rate
  $\gamma+\delta$ and a Brownian motion, starting in 1 at time $0$.
  Let $\sigma_B$ be its hitting time of 0 and set $p := \mathbb 
  P(\sigma_B \leq T)>0$. Note that $\sigma_I$ is given as follows: A
  Poisson point at rate $\gamma$ has to occur at time $s$ with the
  property that the Brownian motion, starting in 1 at time $s$ hits 0
  before any of the other events, bringing $\mathcal I$ back to 1 or
  infinity, occurs. We obtain
  \[\mathbb E_x[\sigma_I] = \frac{1}{p\gamma} + 
  \mathbb E[\sigma_B|\sigma_B<T] = \frac{1}{p\gamma} + \frac{\mathbb
    E[\sigma_B, \sigma_B<T]}{p} \leq \frac{1}{p} \Big(\frac{1}{\gamma}
  + \frac{1}{\gamma+\delta}\Big)<\infty, \] independently of $x$. The
  result follows since $\widehat\sigma_0\leq\sigma_I$, almost
  surely. \qed
\end{proof}

\begin{lemma}[Increments of \boldmath$(\widehat X_t)_{t\geq 0}$]\label{bounds}
  For the $\gamma/\delta$-broken Brownian ratchet, thinned Model~I,
  started in $x\geq 0$, we have
  \begin{setlength}{\leftmargini}{1cm} 
    \begin{enumerate}
      \renewcommand{\labelenumi}{(\roman{enumi})}
    \item $\mathbb{E}_x[\widehat X^2_{\widehat\sigma_0}]<\infty$,
    \item $0<\mathbb{E}_x[\widehat X_{\widehat\sigma_1}-\widehat
      X_{\widehat\sigma_0}]<\infty$,
    \item $0<\mathbb{E}_x[\widehat\sigma_1 - \widehat\sigma_0]<\infty$.
    \end{enumerate}
  \end{setlength}
  \end{lemma}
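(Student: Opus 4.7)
For part (i), the key observation is that on $[0,\widehat\sigma_0)$ the protein lies strictly above the reflection boundary, $\widehat X_t>\widehat R_t$, so the reflection in Definition~\ref{modelI} is never active; the recursive construction of $\widehat X$ therefore reduces to pasting together the independent Brownian motions $B^0,B^1,\dots$ at the (random but $\sigma(\mathcal N^{\gamma,\delta})$-measurable) times $\widehat t_k$. Since the family $(B^k)$ is independent of $\mathcal N^{\gamma,\delta}$, the strong Markov property gives that $W_t:=\widehat X_t-x$ is a standard Brownian motion on $[0,\widehat\sigma_0]$. Because Lemma~\ref{sigma_bounded} provides $\mathbb E_x[\widehat\sigma_0]<\infty$, optional stopping applied to the martingale $(W_t^2-t)$ at the bounded stopping time $\widehat\sigma_0\wedge n$, combined with Fatou's lemma as $n\to\infty$, yields $\mathbb E_x[W^2_{\widehat\sigma_0}]\le \mathbb E_x[\widehat\sigma_0]$, and the trivial bound $(x+W_{\widehat\sigma_0})^2\le 2x^2+2W_{\widehat\sigma_0}^2$ proves~(i).

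For the upper bound in~(iii) I would use the strong Markov property at $\widehat\sigma_0$: by independence of the Poisson points with $\tau\ge\widehat\sigma_0$ from the history, memorylessness of the $\mathrm{Exp}(\delta)$-lifetime of the currently active point, and spatial translation invariance, the conditional distribution of $\widehat\sigma_1-\widehat\sigma_0$ equals that of the first renewal time after $0$ for a fresh copy of thinned Model~I started from $(0,0)$. I would decompose this first renewal time into $\widehat t_1^{\mathrm{new}}$ (time to the first boundary change) plus the subsequent hitting time of the new boundary. The expectation of $\widehat t_1^{\mathrm{new}}$ is bounded by $1/\delta$ for $\delta>0$ (the initial active point dissociates on this timescale), and for $\delta=0$ by the same iid-cycle domination as in the proof of Lemma~\ref{sigma_bounded}. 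After $\widehat t_1^{\mathrm{new}}$, shifting so that the new boundary lies at~$0$ puts us in the setting of Lemma~\ref{sigma_bounded}, which uniformly bounds the subsequent hitting time in expectation by some constant $C$. Strict positivity is immediate since $\widehat\sigma_1>\widehat\sigma_0$ almost surely.

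For~(ii), the identities $\widehat X_{\widehat\sigma_i}=\widehat R_{\widehat\sigma_i}$ and $\widehat R_t\le\widehat X_t$ give $\widehat X_{\widehat\sigma_1}-\widehat X_{\widehat\sigma_0}\le \sup_{t\in[\widehat\sigma_0,\widehat\sigma_1]}(\widehat X_t-\widehat X_{\widehat\sigma_0})$. Writing $\widehat X_t-\widehat X_{\widehat\sigma_0}=W_t+L_t$ with $W$ the driving Brownian motion and $L$ the continuous non-decreasing reflection push, and controlling $L_T$ by $\sup_{s\le T}(-W_s)^+$ augmented by a term from the downward jumps of $\widehat R$ during the cycle, Doob's $L^2$ inequality together with $\mathbb E[T]<\infty$ from~(iii) (where $T=\widehat\sigma_1-\widehat\sigma_0$) yields the finiteness $\mathbb E_x[\widehat X_{\widehat\sigma_1}-\widehat X_{\widehat\sigma_0}]<\infty$. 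The main obstacle is the strict positivity of this expectation: since dissociations may push the reflection boundary downward in the thinned Model~I, the displacement is signed. My plan is to isolate an event $A$ of positive probability on which the displacement is bounded below by a definite constant $\varepsilon>0$ (for instance, in a fresh cycle the first change of $\widehat R$ is an upward jump of height at least $\varepsilon$, no dissociation of the initial active point has occurred by then, and renewal happens in the immediately following sub-interval), and then to sharpen the finiteness bound so that $\mathbb E_x[|\widehat X_{\widehat\sigma_1}-\widehat X_{\widehat\sigma_0}|\,\ind_{A^c}]<\varepsilon\,\mathbb P(A)$; this last step is the quantitative heart of the argument and is where I expect the real work to lie.
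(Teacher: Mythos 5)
Your treatment of (i) matches the paper (both are the second Wald identity / optional stopping on $W_t^2-t$, applied after observing that no reflection occurs on $[0,\widehat\sigma_0)$), and your treatment of (iii) is essentially correct: after renewal at $\widehat\sigma_0$, decompose into the waiting time until the first boundary change $\widehat t$ and the subsequent hitting time, bound the latter by $\sup_x\mathbb E_x[\widehat\sigma_0]$ from Lemma~\ref{sigma_bounded}; for $\mathbb E[\,\widehat t\,]$ the paper dominates $\widehat t$ by the killing time of a Brownian motion killed at rate $\gamma|B_t|$ (Lemma~5.4 of \cite{depper_pfaffel:2010}), which works for all $\delta\ge 0$, whereas your $1/\delta$ bound needs patching for $\delta=0$ and your appeal to ``iid-cycle domination'' does not directly control $\widehat t$.

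The genuine gap is in (ii), and you already flag it yourself: you never actually establish strict positivity. The route you sketch (find an event $A$ of positive probability with a definite positive displacement, then sharpen the $L^1$ bound on $A^c$ to be smaller than $\varepsilon\,\mathbb P(A)$) is not going to work in any simple way: on $A^c$ the displacement can be very negative when the boundary drops by a large exponential jump, and there is no obvious smallness of $\mathbb E[|\widehat X_{\widehat\sigma_1}-\widehat X_{\widehat\sigma_0}|\ind_{A^c}]$ relative to $\varepsilon\,\mathbb P(A)$. The clean argument is the one you already used for (i) but did not redeploy here: on $(\widehat t,\widehat\sigma_1)$ the process stays strictly above the boundary (by definition of $\widehat\sigma_1$), so no reflection local time accrues and $\bigl(\widehat X_{\widehat\sigma_1\wedge t}-\widehat X_{\widehat t}\bigr)_{t\ge\widehat t}$ is a martingale, which is uniformly integrable because $\sup_{t\ge\widehat t}\mathbb E\bigl[(\widehat X_{\widehat\sigma_1\wedge t}-\widehat X_{\widehat t})^2\bigr]=\mathbb E[\widehat\sigma_1-\widehat t]<\infty$ (Wald again). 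Optional stopping then gives $\mathbb E_0[\widehat X_{\widehat\sigma_1}]=\mathbb E_0[\widehat X_{\widehat t}]$, and strict positivity is immediate since $\widehat X_{\widehat t}>0$ almost surely. This replaces both your finiteness argument via Doob and your incomplete positivity argument in one stroke, with no need to control downward jumps of $\widehat R$ separately.
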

\begin{proof}
  For (i), note that $|\widehat X_{\widehat\sigma_0}|\stackrel d =
  |B_{\rho_0}|$. Using the second Wald identity and
  Lemma~\ref{sigma_bounded} we obtain
  \[ \mathbb{E}_x[\widehat
  X^2_{\widehat\sigma_0}]=\mathbb{E}_x[B^2_{\widehat\sigma_0}]
  =\mathbb{E}_x[\widehat\sigma_0]<\infty,\] which shows (i). For (ii)
  and (iii), we can assume that $x=0$ and $\widehat\sigma_0=0$ without
  loss of generality. Consider the time $\widehat t$ of the first jump
  of $(\widehat R_t)_{t\geq 0}$. Of course, we have $\widehat
  X_{\widehat t}>0$ \textit{almost surely} and therefore
  $0<\mathbb{E}[\widehat X_{\widehat t}]<\infty$. Moreover, we have
  that $(\widehat X_{\widehat\sigma_1 \wedge t}-\widehat X_{\widehat
    t})_{t\geq\widehat t}$ is a uniformly integrable
  martingale. Indeed, again using the second Wald identity and
  Lemma~\ref{sigma_bounded},
  \[ \sup_{t\geq \widehat t} \mathbb E_x[(\widehat X_{\widehat\sigma_1 \wedge
    t}-\widehat X_{\widehat t})^2] = \sup_{t\geq \widehat t} \mathbb
  E_x[\widehat\sigma_1 \wedge t - \widehat t\; ] = \mathbb E_x[\widehat\sigma_1 -
  \widehat t\; ] < \infty.\] 
  By the Optional Stopping Theorem we obtain that 
  \[ \mathbb E_0[\widehat X_{\widehat\sigma_1}] = \mathbb E_0[\widehat X_{\widehat t}]
  + \mathbb E_0[\widehat X_{\widehat\sigma_1} -  \widehat X_{\widehat t}] = \mathbb
  E_0[\widehat X_{\widehat t}] \] 
  and (ii) follows since $0<\mathbb{E}[\widehat X_{\widehat t}]<\infty$.  For
  (iii), rewrite $\widehat\sigma_1 = (\widehat\sigma_1-\widehat t)+\widehat t$. We
  know that $\widehat t$ is bounded from above by the killing time
  $\tau$ of a Brownian motion, if it is killed at rate $\gamma|B_t|$
  at time $t$. In \citet[Lemma 5.4]{depper_pfaffel:2010} it was shown
  that $\mathbb{E}_0[\tau]<\infty$. Hence, by 
  Lemma~\ref{sigma_bounded},
  \begin{align*}
    \mathbb E_0[\widehat\sigma_1] & = \mathbb E_0[\widehat\sigma_1-\widehat t]  
    + \mathbb E_0[\, \widehat t\; ] \\ & = \mathbb E_0 \left[\mathbb E_{\widehat
        X_{\widehat t} - \widehat R_{\widehat t}} [\widehat\sigma_0] \right]  +
    \mathbb E_0[\, \widehat t\; ] \leq \sup_{x\geq 0} \mathbb E_x[\widehat\sigma_0] +
    \mathbb E_0[\tau] < \infty, 
  \end{align*}
  which finishes the proof. \qed
\end{proof}

\begin{proposition}\label{remainder}
  Recall from Remark~\ref{remark_LLN} the $\gamma/\delta$-broken
  Brownian ratchet as a cumulative process with remainder
  $(A_t)_{t\geq 0}$ from \eqref{cumul_process}. Then,
  \begin{align*}
    \lim_{t\rightarrow\infty}\frac{A_t}{t}=0\quad\text{ almost
      surely.}
  \end{align*}
\end{proposition}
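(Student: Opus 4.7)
The plan is to dominate $A_t$ by a quantity that depends only on the renewal cycle containing $t$, and then to exploit the iid structure of these cycles together with the moment bounds from Lemma~\ref{sigma_bounded} and Lemma~\ref{bounds}. Using~\eqref{cumul_process} and~\eqref{eq:cumul_proc_At} one has $A_t = \widehat X_{\widehat\sigma_0} + \widehat X_t - \widehat X_{\widehat\sigma_{M_t}}$. Since $\widehat\sigma_{M_t-1} \leq t < \widehat\sigma_{M_t}$, both $t$ and $\widehat\sigma_{M_t}$ lie in the cycle $[\widehat\sigma_{M_t-1}, \widehat\sigma_{M_t}]$, hence
\begin{equation*}
|A_t| \leq \widehat X_{\widehat\sigma_0} + D_{M_t}, \qquad D_n := \sup_{s, s' \in [\widehat\sigma_{n-1}, \widehat\sigma_n]}\bigl|\widehat X_s - \widehat X_{s'}\bigr|,
\end{equation*}
and by the renewal property of the thinned Model~I stated just before Remark~\ref{remark_LLN}, $(D_n)_{n\geq 1}$ is an iid sequence.

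The key step is to show $\mathbb E[D_1] < \infty$. On the cycle $[\widehat\sigma_0, \widehat\sigma_1]$ the process $\widehat X$ is built from Brownian pieces glued at the jump times of the reflection boundary $\widehat R$, with $\widehat X_s = \widehat R_s + H_s$ and $H_s \geq 0$. The Brownian range on each sub-piece is controlled by Doob's maximal inequality and the second Wald identity, while the range of $\widehat R$ itself is controlled by the number of jumps during the cycle, which, as in the coupling used in Lemma~\ref{sigma_bounded}, is stochastically dominated by a function of $T:=\widehat\sigma_1 - \widehat\sigma_0$. Combining these with Lemma~\ref{bounds}(iii) should yield an estimate of the form $\mathbb E[D_1^2] \leq c\,\mathbb E[T] < \infty$, and in particular $\mathbb E[D_1] < \infty$.

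Granting this, Lemma~\ref{bounds}(iii) also gives that $\mu := \mathbb E[\widehat\sigma_1 - \widehat\sigma_0] \in (0,\infty)$, so the strong law for iid cycle lengths yields $M_t / t \to 1/\mu$ a.s.; and $\mathbb E[D_1] < \infty$ combined with a Borel--Cantelli argument (via $\sum_n \mathbb P(D_n > \varepsilon n) \leq \mathbb E[D_1]/\varepsilon < \infty$ for every $\varepsilon > 0$) gives $D_n/n \to 0$ a.s., hence $D_{M_t}/M_t \to 0$ a.s. Putting everything together,
\begin{equation*}
\frac{|A_t|}{t} \leq \frac{\widehat X_{\widehat\sigma_0}}{t} + \frac{D_{M_t}}{M_t}\cdot\frac{M_t}{t} \longrightarrow 0 \qquad \text{a.s.\ as } t\to\infty.
\end{equation*}
The main obstacle is the moment bound on $D_1$: because $\widehat R$ is not monotone on a cycle (it jumps up on binding events and can jump down on dissociation), $D_1$ is not simply the range of a single Brownian motion, and the piecewise bounds between boundary jumps must be summed carefully using the coupling idea from Lemma~\ref{sigma_bounded}. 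Note that any finite bound on $\mathbb E[D_1]$ (rather than $\mathbb E[D_1^2]$) is enough for the a.s.\ convergence $D_n/n \to 0$, so a rougher domination would already suffice.
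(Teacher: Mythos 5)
Your overall plan matches the paper's own: write $|A_t| \leq \widehat X_{\widehat\sigma_0} + D_{M_t}$ with $D_n$ the oscillation of $\widehat X$ over the $n$-th renewal cycle (the paper uses the equivalent quantity $Y_n = \sup_{t\in[\widehat\sigma_{n-1},\widehat\sigma_n]}|\widehat X_t - \widehat X_{\widehat\sigma_n}|$), exploit the iid structure, and run a Borel--Cantelli argument to get $D_n/n \to 0$ a.s.\ (the paper cites Lemma~7 of Smith~(1955) for exactly this). So the architecture is identical.

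However, you flag the moment bound $\mathbb E[D_1] < \infty$ as the ``main obstacle'' and only sketch a plan for it, and the plan you sketch is needlessly heavy and would be awkward to carry out: decomposing the cycle into sub-pieces between boundary jumps and ``summing carefully'' would require controlling both the number of jumps in a cycle and the range of $\widehat R$ itself, and the coupling from Lemma~\ref{sigma_bounded} does not directly give a second-moment control of the range. The paper's argument bypasses this entirely with a much simpler observation: $(\widehat X_t)_{t\geq 0}$ is a nonnegative submartingale. Indeed, on each interval $[\widehat t_n,\widehat t_{n+1})$ one has $\widehat X_t = \widehat R_t + |B^k_{t-\widehat t_n} + \widehat X_{\widehat t_n-} - \widehat R_t|$ with $\widehat R$ constant there and $\widehat X_{\widehat t_n-} - \widehat R_{\widehat t_n}\geq 0$, so $\widehat X$ is a reflected Brownian motion above a fixed nonnegative level, hence a nonnegative submartingale on that interval; and $\widehat X$ is continuous at the jump times $\widehat t_n$, so the submartingale property passes through by optional sampling. (Your worry that $\widehat R$ is non-monotone is a red herring --- non-monotonicity of the boundary does not affect the convexity/submartingale argument.) With this in hand, Doob's $L^2$ maximal inequality immediately gives
\begin{align*}
\mathbb E[Y_1^2] \leq 2\,\mathbb E[\widehat X_{\widehat\sigma_1}^2] + 2\,\mathbb E\Bigl[\sup_{t\geq 0}\widehat X_{t\wedge\widehat\sigma_1}^2\Bigr] \leq 10\,\mathbb E[\widehat X_{\widehat\sigma_1}^2] < \infty,
\end{align*}
where the finiteness of $\mathbb E[\widehat X_{\widehat\sigma_1}^2]$ follows from the second Wald identity together with Lemma~\ref{sigma_bounded} and Lemma~\ref{bounds}, just as in the proof of Lemma~\ref{bounds}(i). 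So: your route is correct in outline, but the one step you identify as hard has a one-line solution via the submartingale property of $\widehat X$, and you should state and use that property explicitly rather than attempt a piecewise summation.
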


\begin{proof}
  We proceed as in Lemma 8 in \cite{Smith:1955}. Define for
  $n=1,2,\dots$
  \begin{align*}
    Y_n=\sup_{t\in[\widehat\sigma_{n-1},\widehat\sigma_n]}|\widehat X_t - \widehat
    X_{\widehat\sigma_n}|
  \end{align*}
  and note that $Y_1, Y_2,\dots$ are independent and identically
  distributed. We write, using Lemma~\ref{bounds}(i) and the law of
  large numbers for $(M_t)_{t\geq 0}$
  \begin{equation}
    \label{eq:bounds1}
    \begin{aligned}
      \lim_{t\to\infty} \frac{|A_t|}{t} & \leq \lim_{t\to\infty}
      \frac{|\widehat X_{\widehat\sigma_0}|}{t} + \lim_{t\to\infty}
      \frac{|\widehat X_t - \widehat X_{\widehat\sigma_{M_t}}|}{t} \leq
      \lim_{t\to\infty} \frac{M_t}{t} \cdot \lim_{t\to\infty}
      \frac{Y_{M_t}}{M_t} \\ & = \frac{1}{\mathbb E[\widehat\sigma_1 -
        \widehat\sigma_0]} \cdot \lim_{n\to\infty} \frac{Y_{n}}{n}.
    \end{aligned}
  \end{equation}
  Since $Y_1\leq \widehat X_{\widehat\sigma_1} + \sup_{t\geq 0}
  \widehat X_{t\wedge \widehat\sigma_1}$, by Doob's inequality,
  \begin{align*}
    \mathbb E[Y_1^2] \leq 2\mathbb E[\widehat X_{\widehat\sigma_1}^2] +
    2\mathbb E[\sup_{t\geq 0} \widehat X_{t\wedge \widehat\sigma_1}^2] \leq
    2\mathbb E[\widehat X_{\widehat\sigma_1}^2] + 8 \mathbb E[\widehat
    X_{\widehat\sigma_1}^2] < \infty,
  \end{align*}
  it follows that $Y_n/n \to 0$ almost surely, as $n\to\infty$ by
  Lemma~7 in \cite{Smith:1955}. Now, the result is implied
  by~\eqref{eq:bounds1}. \qed
\end{proof}

\subsection{Proof of Theorem~\ref{T:speedI}}
\label{sub:proofI} The proof consists of three steps. We derive the
lower and upper bound for the speed of the broken ratchet. Then, we
prove the scaling property.

~

\noindent {\bf Step 1: Lower bound for the speed}: The thinned Model~I
of the $\gamma/\delta$-broken Brownian ratchet uses less possible
reflection boundaries, leading to a smaller speed. Hence, if we can
show that it still has positive speed we obtain that Model~I has
positive speed as well, as claimed in Theorem~\ref{T:speedI}.

In order to prove that thinned Model~I has a positive speed, we use
the law of large numbers for the cumulative process as introduced in
Remark~\ref{remark_LLN}. By the law of large numbers for
$(S_n)_{n=1,2,\dots}$ and $(M_t)_{t\geq 0}$, Lemma~\ref{bounds} and
Proposition~\ref{remainder}, we have
\begin{align*}
  \frac{\widehat X_t}{t} & = \frac{S_{M_t}}{M_t} \frac{M_t}{t} +
  \frac{A_t}{t} \xrightarrow{t\to\infty}\frac{\mathbb{E}[\widehat
    X_{\widehat\sigma_1} - \widehat X_{\widehat\sigma_0}]}{\mathbb
    E[\widehat\sigma_1 - \widehat\sigma_0]}=: c \text{ almost surely.}
\end{align*}

~

\noindent {\bf Step 2: Upper bound for the speed}: The speed of the
$\gamma/\delta$-broken Brownian ratchet is decreasing in $\delta$. In
particular, taking $\delta=0$ gives an upper bound for the
speed. Moreover this bound is independent of $\delta$. We find from
\cite{depper_pfaffel:2010} that
\[\frac{X_t^{\gamma,\delta}}{t}\leq\frac{X_t^{\gamma,0}}{t}
\xrightarrow{t\to\infty}C \text{ almost surely.}\] for
some $0<C<\infty$.

~

\noindent {\bf Step 3: Scaling property}: As a consequence of
Proposition~\ref{P:scaleI} we have
\begin{align*}
  \liminf_{t\rightarrow\infty}\frac{\mathbb{E}[X_t^{\gamma;\delta}]}{t}
  & =\liminf_{t\rightarrow\infty}\gamma^{-\frac{1}{3}}\frac{\mathbb{E}
    [X_{\gamma^{2/3}t}^{1;\delta/\gamma^{2/3}}]}{t}
  =\liminf_{t\rightarrow\infty}\gamma^{\frac{1}{3}}
  \frac{\mathbb{E}[X_{\gamma^{2/3}t}^{1;\delta/\gamma^{2/3}}]}{\gamma^{2/3}t}
  \\ & =\liminf_{t\rightarrow\infty}\gamma^{\frac{1}{3}}
  \frac{\mathbb{E}[X_{t}^{1;\delta/\gamma^{2/3}}]}{t}
\end{align*}
where $\liminf$ can as well be replaced by $\limsup$.

\section{Proofs: Model~II}
We start in Section~\ref{sub:scaleII}, Proposition~\ref{P:scaleII},
with showing the same scaling property as given in
Proposition~\ref{P:scaleI} for Model~I. The rest of the section is
concerned with concrete calculations. Model~II is linked to killed
Brownian motion in Section~\ref{sub:killed}. We interpret Model~II as
a (random) additive functional of a Markov chain, where the related Markov
chain is defined in Definition~\ref{Y_W_eta} and is 
shown to have a unique equilibrium in Propositions~\ref{P4} and~\ref{P5} in
Section~\ref{ss:markov}. For this equilibrium, it is possible to compute
increments analytically, which is carried out in the proof of
Theorem~\ref{T:speedII} in Section~\ref{sub:proofT2}.  

\subsection{Scaling property}
\label{sub:scaleII}
The following result is the analogous result of
Proposition~\ref{P:scaleI} for Model~II.

\begin{proposition}[Scaling property, Model~II]\label{P:scaleII}
  Let $(\widetilde X_t^{\gamma;\delta}, \widetilde
  {R}_t^{\gamma;\delta})_{t\geq 0}$ be the $\gamma/\delta$-broken
  Brownian ratchet, Model~II, with $\gamma>0, \delta\geq 0$ and
  initial value $(0,0)$. Then, the following holds:
  \begin{align}\label{skalierung2}
    (\widetilde X_t^{\gamma;\delta}, \widetilde
    R_t^{\gamma;\delta})_{t\geq 0}\overset{d}{=}
    \gamma^{-\frac{1}{3}}\left(\widetilde
      X_{\gamma^{2/3}t}^{1;\delta/\gamma^{2/3}}, \widetilde
      R_{\gamma^{2/3}t}^{1;\delta/\gamma^{2/3}}\right)_{t\geq 0}.
  \end{align}
\end{proposition}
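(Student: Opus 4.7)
The plan is to mimic the proof of Proposition~\ref{P:scaleI} almost verbatim, with the only subtleties being that $\widetilde{\mathcal{N}}^\gamma$ lives in $\mathbb{R}^2$ rather than $\mathbb{R}^3$, and that we have two additional families of exponential random variables ($Z_i$ and $E_i$) that need to be rescaled separately in place of the $z$-coordinate of the Poisson points in Model~I.

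Concretely, I would introduce the linear map
\[ \widetilde g_\gamma : \mathbb R^2 \to \mathbb R^2, \qquad (t,x) \mapsto (\gamma^{2/3} t, \gamma^{1/3} x), \]
and check first that $\widetilde g_\gamma(\widetilde{\mathcal{N}}^\gamma) \stackrel{d}{=} \widetilde{\mathcal{N}}^{1}$. This is immediate from the mapping theorem for Poisson processes: the Jacobian of $\widetilde g_\gamma$ is $\gamma$, so the image intensity is $\gamma \cdot \gamma^{-1} \lambda^2 = \lambda^2$, and the conditioning on a point at $(0,0)$ is preserved. For the Brownian motions, Brownian scaling gives $(\gamma^{2/3} s, \gamma^{1/3} \widetilde B^k_s)_{s \geq 0} \stackrel{d}{=} (t, \widetilde B^k_t)_{t \geq 0}$ after setting $t = \gamma^{2/3} s$, exactly as in \eqref{eq:BB1}.

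Next I would check the two exponential families. A direct density computation shows $\gamma^{2/3} Z_i \stackrel{d}{=} \mathrm{Exp}(\delta / \gamma^{2/3})$, which is the correct distribution of the $Z$-variables for a $1/(\delta/\gamma^{2/3})$-ratchet; and $\gamma^{1/3} E_i \stackrel{d}{=} \mathrm{Exp}(\gamma^{2/3}/\delta)$, which matches the required parameter $\gamma'/\delta' = 1/(\delta/\gamma^{2/3})$ for the jump-down exponentials in the rescaled model.

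Finally, I would trace through Definition~\ref{modelII}: substituting the jointly rescaled ingredients $(\widetilde g_\gamma(\widetilde{\mathcal{N}}^\gamma), (\gamma^{2/3} Z_i)_i, (\gamma^{1/3} E_i)_i, (\gamma^{1/3} \widetilde B^k_{\cdot}))$ into the recursion \eqref{eq:taun}, every relation (the spatial constraint $r < X_\tau$, the temporal constraint $\tau < \widetilde t_n + Z_n$, the jump $\widetilde r_n - E_n$, and the reflected-Brownian update) is homogeneous of the appropriate degree, so the recursion yields precisely $\gamma^{-1/3}(\widetilde X^{1;\delta/\gamma^{2/3}}_{\gamma^{2/3} t}, \widetilde R^{1;\delta/\gamma^{2/3}}_{\gamma^{2/3} t})_{t \geq 0}$. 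No step is genuinely difficult; the only care needed is in matching the exponent of $\gamma$ for each rescaled object so that the Poisson intensities, the exponential rates, and the Brownian scaling are all simultaneously consistent — which is exactly why the choice of exponents $(2/3, 1/3, 2/3)$ is forced.
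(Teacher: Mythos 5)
Your argument is correct and is essentially the same as the paper's: rescale $\widetilde{\mathcal N}^\gamma$ and the Brownian motions exactly as in Proposition~\ref{P:scaleI}, and separately rescale the time-direction exponentials $Z_i$ by $\gamma^{2/3}$ and the space-direction exponentials $E_i$ by $\gamma^{1/3}$, checking that each becomes the correct exponential for the $(1,\delta/\gamma^{2/3})$-ratchet. The paper compresses these checks into the single line $\widetilde g_\gamma(Z_n^\delta,E_n^{\gamma;\delta})\stackrel d=(Z_n^{\delta/\gamma^{2/3}},E_n^{1;\delta/\gamma^{2/3}})$, but your more explicit version of the same computation is equally valid.
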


\begin{proof}
  We use supercripts $\gamma,\delta$ in order to emphasize dependency
  on the parameters and start with the $(\widetilde
  X_t^{\gamma,\delta}, \widetilde R_t^{\gamma,\delta})_{t\geq
    0}$-ratchet.  Recall the notation from the proof of
  Proposition~\ref{P:scaleI} as well as $\widetilde
  g(N^\gamma)\stackrel d = N^1$. Note that the rate-$\delta$ random
  variables $Z^\delta_1, Z^\delta_2,\dots$ in Definition~\ref{modelII}
  only affect the $t$-direction, while the random variables
  $E_1^{\gamma;\delta}, E_2^{\gamma;\delta}, E_3^{\gamma;\delta},
  \dots$ at rate $\gamma/\delta$ affect the $x$-direction. Since,
  $\widetilde g_\gamma(Z^\delta_n, E_n^{\gamma;\delta}) \stackrel d =
  (Z_n^{\delta/\gamma^{2/3}}, E_n^{1;\delta/\gamma^{2/3}})$, and since
  the same rescaling as in \eqref{eq:BB1} holds in Model~II, we have
  shown that
  \[  \gamma^{1/3}(\widetilde X_t^{\gamma;\delta},\widetilde
  R_t^{\gamma;\delta})_{t\geq 0} \stackrel d = (\widetilde X_{\gamma^{2/3} 
    t}^{1;\delta/\gamma^{2/3}},\widetilde R_{\gamma^{2/3}
    t}^{1;\delta/\gamma^{2/3}})_{t\geq 0}\]
  and the assertion follows. \qed
\end{proof}

\subsection{Connections to killed Brownian motion}
\label{sub:killed}
The Brownian motion which drives the broken Brownian ratchet,
Model~II, is reflected at the same boundary until $R_t$ changes. We
interpret the time when this happens as a killing time of the Brownian
motion. Due to the scaling property of the Brownian ratchet it would
be possible to carry out the computations in the notationally
convenient case $\gamma=\frac{1}{2}$ and deduce the general result
from this particular one. However, we feel that an approach explicitly
allowing for all values of $\gamma$ is more transparent.

\begin{definition}[Killed reflected Brownian motion]
  \label{def:killedBM}
  Let $(B_t)_{t\geq 0}$ denote Brownian motion started in
  $x\geq 0$ and consider the stopping time $\eta$ defined by
  \begin{align}
    \label{eq:killingtime}
    \eta:=\inf\Big\{t >0 : \int_0^t (\gamma |B_s| +\delta)\, ds \ge
    \xi\Big\},
  \end{align}
  where $\xi$ is an independent exponentially distributed random
  variable of rate~$1$.
  
  Define $\widetilde{\mathcal{B}}:=(\widetilde B_t)_{t\geq 0}$ by
  $\widetilde B_t:=|B_t|$ for $0\leq t< \eta$ and $\widetilde
  B_t=\Delta$ for $t\geq \eta$, where $\Delta\not\in\mathbb{R}$ is the
  cemetery state. Then $\widetilde{\mathcal{B}}^x =
  \widetilde{\mathcal B}$ is reflected Brownian motion killed at rate
  $\gamma \widetilde B_t + \delta$ at time $t$. Denote the
  probability measure of the Brownian motion started in $x$ by
  $\mathbb{P}_x$ and write $\mathbb{E}_x$ for the respective
  expectation.
\end{definition}

\noindent
The second order operator $\mathcal A$ associated to the killed
Brownian motion defined above acts on $C^2$ functions $f: [0,\infty)
\to \mathbb R$ satisfying $f'(0+)=0$ according to
\begin{align}
  \label{eq:killed-operator}
   \mathcal A f(x) = \frac12 f''(x) -(\gamma x +\delta) f(x).  
\end{align}
The diffusion process $\widetilde{\mathcal{B}}$ is transient since it
can be killed in each interval with positive probability. Let
$p(\cdot;\cdot,\cdot)$ denote the corresponding transition density
with respect to the speed measure which is given by $m(dx)=2dx$ (see
\citet[p.~17]{boro_sal:2002}). In the same way as in the proof of
Lemma~5.4. in \cite{depper_pfaffel:2010}, the Green function of
$\widetilde{\mathcal B}^x$ is given by
\begin{align*}
  G(x,y):=\int_0^\infty p(t;x,y)\, dt,
\end{align*}
which is finite for each $x,y\geq 0$. It can be written in terms of
solutions of the differential equation $\mathcal A u=0$ which reads
\begin{align}
  u''(x)-(2\gamma x + 2\delta)u(x) =0\label{2nd_operator2}.
\end{align}
The following lemma is the basis for all calculations to follow.

\begin{lemma}[Green function of killed Brownian motion]
  The Green function of the killed Brownian motion from
  Definition~\ref{def:killedBM} is given by
  \begin{align}\label{Green_new}
    G(x,y):= \frac1w 
    \begin{cases}
      \psi(x)\phi(y)	& \text{for } 0\leq x \leq y,\\ 
      \psi(y)\phi(x)	& \text{for } 0\leq y \leq x, 
    \end{cases}
  \end{align}
  where 
  \begin{align}
    \label{eq:w} 
    w & = \frac{(2\gamma)^{1/3}}\pi, \intertext{and}
    \label{eq:phi}
    \phi(x) & = Ai\Big( (2\gamma)^{1/3} x +
    \tfrac{2^{1/3}\delta}{\gamma^{2/3}}\Big), \\
    \label{eq:psi} 
    \psi(x) & = Bi\Big( (2\gamma)^{1/3} x +
    \tfrac{2^{1/3}\delta}{\gamma^{2/3}}\Big) - C Ai\Big(
    (2\gamma)^{1/3} x + \tfrac{2^{1/3}\delta}{\gamma^{2/3}}\Big),
    \intertext{with} C & =C(\gamma,\delta)=
    \frac{Bi'(\tfrac{2^{1/3}\delta}{\gamma^{2/3}})}{
      Ai'(\tfrac{2^{1/3}\delta}{\gamma^{2/3}})}
  \end{align}
  are two solutions of~\eqref{2nd_operator2}.
\end{lemma}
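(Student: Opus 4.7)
\emph{Proof plan.} The plan is to apply the classical Green function representation for a regular one-dimensional diffusion in terms of two fundamental solutions of $\mathcal A u = 0$, one selected by the reflecting boundary at $0$ and the other by the behavior at $+\infty$, and then to identify these solutions explicitly via the Airy equation.

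First I would reduce \eqref{2nd_operator2} to the Airy equation. The affine substitution $z(x) := (2\gamma)^{1/3} x + 2^{1/3}\delta/\gamma^{2/3}$ transforms $u''(x) - (2\gamma x + 2\delta) u(x) = 0$ into $u_{zz}(z) = z\,u(z)$, whose fundamental solutions are $Ai$ and $Bi$. Pulling back via $z$, the function $\phi$ in \eqref{eq:phi} and every linear combination $Bi(z(\cdot)) - \widetilde C\, Ai(z(\cdot))$ is a solution of $\mathcal A u = 0$. Since $Ai(z)$ decays exponentially and $Bi(z)$ grows exponentially as $z\to\infty$, $\phi$ is (up to scalars) the unique solution bounded at $+\infty$, which is the \emph{decreasing} fundamental solution needed for the Green function. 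For the \emph{increasing} solution $\psi$, the reflecting boundary at $0$ from \eqref{eq:killed-operator} imposes the Neumann condition $\psi'(0+) = 0$; writing $\psi(x) = Bi(z(x)) - C\,Ai(z(x))$ and using $z'(0) = (2\gamma)^{1/3}\neq 0$ together with $Ai'(\beta)\neq 0$ for $\beta := 2^{1/3}\delta/\gamma^{2/3}\geq 0$ pins down $C$ as in \eqref{eq:psi}.

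Next I would invoke the standard Green function formula for regular one-dimensional diffusions (see \cite{boro_sal:2002}, Chap.~II.11): with scale function $s(x)=x$ and speed measure $m(dx) = 2\,dx$ for reflecting Brownian motion, and $\infty$ a natural boundary due to transience (noted before the lemma), the Green function of $\widetilde{\mathcal B}$ with respect to $m$ is
\[
G(x,y) = \frac{1}{w}\,\psi(x\wedge y)\,\phi(x\vee y), \qquad w := \psi'(x)\phi(x) - \psi(x)\phi'(x),
\]
and $w$ is independent of $x$ because both $\phi$ and $\psi$ solve $\mathcal A u = 0$. The chain rule then reduces $w$ to a constant multiple of the classical Airy Wronskian $Ai(z)Bi'(z) - Ai'(z)Bi(z) = 1/\pi$ (the cross terms $Ai(z)Ai'(z)$ in $w$ cancel), giving $w = (2\gamma)^{1/3}/\pi$ as in \eqref{eq:w}.

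The only real obstacle is bookkeeping: aligning the sign and normalization of the Wronskian $w$ with the speed-measure normalization $m(dx) = 2\,dx$ used in the probabilistic definition $G(x,y) = \int_0^\infty p(t;x,y)\,dt$, so that the leading constant comes out correctly. Once this is set up, the calculation is routine, and the transience asserted before the lemma guarantees that $G$ is finite on $[0,\infty)^2$, consistent with the integrability of $Ai$.
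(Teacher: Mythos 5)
Your proposal follows essentially the same route as the paper: reduce \eqref{2nd_operator2} to the Airy equation by the affine change of variables, pick $\phi$ as the decaying solution and $\psi$ as the one satisfying the Neumann condition $\psi'(0+)=0$ (which pins down $C$), and then invoke the Borodin--Salminen Green function formula $G(x,y)=w^{-1}\psi(x\wedge y)\phi(x\vee y)$ together with the classical Airy Wronskian $Ai\,Bi'-Ai'\,Bi=1/\pi$ to get $w=(2\gamma)^{1/3}/\pi$. The paper's proof is the same in substance, merely phrased via the requirement that $f=\phi$ be positive, decreasing, and vanishing at $\infty$ and $g=\psi$ be positive, increasing, with $g'(0)=0$; the Wronskian computation and the cancellation of the $Ai\cdot Ai'$ cross terms that you point out is exactly \eqref{eq:wronsk}.
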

\begin{proof}
  In order to compute the Green function of the killed Brownian motion
  we need two particular solutions, say $f$ and $g$, of the equation
  \eqref{2nd_operator2} on $[0,\infty)$ such that \citep[see
  e.g.][p.~18--19]{boro_sal:2002}
  \begin{setlength}{\leftmargini}{1cm} 
    \begin{enumerate}
    \item[(i)] $f$ is positive, strictly decreasing and $f(x) \to 0$ as $x 
      \to \infty$,
    \item[(ii)] $g$ is positive, strictly increasing, 
    \item[(iii)] $g'(0)=0$ (this is the condition for reflecting boundary).  
    \end{enumerate}
  \end{setlength}
  The functions $x\mapsto Ai\Big( (2\gamma)^{1/3} x +
  \tfrac{2^{1/3}\delta}{\gamma^{2/3}}\Big)$ and $x\mapsto Bi\Big(
  (2\gamma)^{1/3} x + \tfrac{2^{1/3}\delta}{\gamma^{2/3}}\Big)$ are
  two linearly independent solutions of \eqref{2nd_operator2}. It is
  easy to check that the requirements (i)--(iii) are satisfied by
  functions $f=\phi$ and $g=\psi$ defined in \eqref{eq:phi}
  respectively \eqref{eq:psi}.
  
  The Wronskian of $\psi$ and $\phi$ is given by
  \begin{equation}
    \label{eq:wronsk}
    \begin{aligned}
      w & =\psi'(0)\phi(0)-\psi(0)\phi'(0) = -\psi(0)\phi'(0) \\ & =
      (2\gamma)^{1/3}\big(Bi'(\tfrac{2^{1/3}\delta}{\gamma^{2/3}})
      Ai(\tfrac{2^{1/3}\delta}{\gamma^{2/3}})  -
      Bi(\tfrac{2^{1/3}\delta}{\gamma^{2/3}})
      Ai'(\tfrac{2^{1/3}\delta}{\gamma^{2/3}})\big) =
      \frac{(2\gamma)^{1/3}}{\pi}, 
    \end{aligned}
  \end{equation}
  hence all results follow from \citep[see
  e.g.][p.~18-19]{boro_sal:2002}. \qed 
\end{proof}

\noindent
The density of the killing position with respect to Lebesgue measure
is $k(y)=2\gamma y+2\delta$ and moreover, the killing position of the
Brownian motion started in $x$ has density
\begin{align}\label{fy}
  f(y):=G(x,y)k(y)
\end{align}
with respect to Lebesgue measure \citep[see][p.~17 resp. p.~14]{boro_sal:2002}. 

\noindent
The next lemma will be used in the proof of Proposition~\ref{P4}.

\begin{lemma}[Bounds on increments of killed Brownian motion]
  For any $x \ge 0$
  \begin{align}
    \label{eq:1stmomkilledB}
    \frac{\phi(0)\psi(0)}w \le \mathbb E_x[\widetilde B_{\eta-}] \le x +
    \frac{\phi(0)\psi(0)}w.   
  \end{align}
\end{lemma}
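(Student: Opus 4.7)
The plan is to compute $\mathbb E_x[\widetilde B_{\eta-}]$ in closed form from the Green-function representation \eqref{fy} of the killing distribution, and then read off both inequalities from the shape of $\phi$. The starting point is
\[
\mathbb E_x[\widetilde B_{\eta-}] = \int_0^\infty y\, G(x,y)\, k(y)\, dy = \frac{\phi(x)}{w}\int_0^x y\,\psi(y)\, k(y)\, dy + \frac{\psi(x)}{w}\int_x^\infty y\,\phi(y)\,k(y)\, dy,
\]
using the two-piece form of $G$ in \eqref{Green_new}.

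The key observation is that $\phi$ and $\psi$ solve \eqref{2nd_operator2}, i.e.\ $\phi''=k\phi$ and $\psi''=k\psi$, so $y\,\phi(y)\,k(y)=y\phi''(y)$ and likewise for $\psi$. A single integration by parts on each piece, using $\psi'(0)=0$ (the reflecting-boundary condition for $\psi$) together with the Airy-type decay of $\phi,\phi'$ at infinity, reduces both integrals to boundary terms
\[
\int_0^x y\,\psi''(y)\,dy = x\psi'(x)-\psi(x)+\psi(0), \qquad \int_x^\infty y\,\phi''(y)\,dy = -x\phi'(x)+\phi(x).
\]
Plugging these back in, the cross-terms combine via the constancy of the Wronskian $\phi\psi'-\psi\phi'\equiv w$ (two solutions of the same linear second-order ODE) into exactly $x$, yielding the closed form
\[
\mathbb E_x[\widetilde B_{\eta-}] = x + \frac{\psi(0)\,\phi(x)}{w}.
\]

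From this identity both bounds follow at once. The upper bound uses only that $\phi$ is non-increasing (property (i) in the construction of $G$), so $\phi(x)\le\phi(0)$. For the lower bound, convexity of $\phi$ on $[0,\infty)$, which is forced by $\phi''=k\phi\ge 0$, yields $\phi(0)-\phi(x)\le -\phi'(0)x$; multiplying by $\psi(0)/w\ge 0$ and invoking the Wronskian identity $w=-\psi(0)\phi'(0)$ from \eqref{eq:wronsk} (which relies on $\psi'(0)=0$) turns this into $\psi(0)(\phi(0)-\phi(x))/w\le x$, which rearranges to $\mathbb E_x[\widetilde B_{\eta-}]\ge \psi(0)\phi(0)/w$.

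The only mildly delicate point I foresee is justifying that $y\phi'(y)\to 0$ as $y\to\infty$ so that the boundary term at infinity in the second integration by parts vanishes; this is routine from the Airy asymptotics $Ai(z)\sim \tfrac{1}{2\sqrt\pi}z^{-1/4}e^{-\frac{2}{3}z^{3/2}}$ and is the main technical checkpoint rather than a genuine obstacle.
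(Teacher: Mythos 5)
Your proposal is correct and follows essentially the same route as the paper: identical Green-function decomposition, identical integration by parts using $\phi''=k\phi$, $\psi''=k\psi$, and the Wronskian to arrive at the closed form $\mathbb E_x[\widetilde B_{\eta-}]=x+\psi(0)\phi(x)/w$, with the upper bound from $\phi$ decreasing. Your lower-bound argument (convexity of $\phi$ from $\phi''=k\phi\ge 0$, then the tangent-line inequality $\phi(0)-\phi(x)\le -\phi'(0)x$ combined with $w=-\psi(0)\phi'(0)$) is a cleaner, coordinate-free phrasing of the same fact the paper proves by noting directly that $x\mapsto x-Ai(\cdot)/Ai'(a)$ is increasing, but the underlying content is identical.
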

\begin{proof}
  Using the fact that $\psi$ and $\phi$ solve \eqref{2nd_operator2} and then
  integration by parts we obtain 
  \begin{align*}
    \mathbb E_x[\widetilde B_{\eta-}] & = \int_0^\infty y G(x,y)k(y)
    \, dy \\ & = \frac{\psi(x)}w \int_x^\infty y \phi(y) (2\gamma y+
    2\delta)\, dy + \frac{\phi(x)}w \int_0^x y \psi(y) (2\gamma y +
    2\delta)\, dy \\ & = \frac{\psi(x)}w \int_x^\infty y \phi''(y) \,
    dy + \frac{\phi(x)}w \int_0^x y \psi''(y)\, dy \\ & = \frac{x}w(-
    \psi(x) \phi'(y) + \phi(x)\psi'(x)) + \frac{\phi(x)\psi(0)}w = x +
    \frac{\phi(x)\psi(0)}w.
  \end{align*}
  Now using the fact that $\phi$ is strictly decreasing we obtain the
  upper bound in \eqref{eq:1stmomkilledB}. Furthermore, due to
  $\psi'(0)=0$ by the choice of $C$ we have $\psi(0)=-w/\phi'(0)=
  -w/Ai'(\tfrac{2^{1/3}\delta}{\gamma^{2/3}})$. As the function
  \begin{align*}
    x \mapsto x + \frac{\phi(x)\psi(0)}w = x - \frac{Ai\Big(
      (2\gamma)^{1/3} x +
      \tfrac{2^{1/3}\delta}{\gamma^{2/3}}\Big)}{Ai'(\tfrac{2^{1/3}\delta}{\gamma^{2/3}})}
  \end{align*}
  is strictly increasing  on $[0,\infty)$ its minimum is attained in
  $x=0$. This gives the lower bound in \eqref{eq:1stmomkilledB}. 
  \qed 
\end{proof}

\subsection{The invariant distribution for increments at jump
  times} \label{ss:markov} One advantage of Model~II as compared to
Model~I is that we can define a Markov chain which models time- and
space-increments at jump times of the reflection boundary. We show
that the Markov chain has an equilibrium (Proposition~\ref{P4}) which
is unique (Proposition~\ref{P5}).

\begin{definition}[Markov chain at jump times]\label{Y_W_eta}
  Consider the $\gamma/\delta$-broken Brownian ratchet $(\widetilde
  X_t, \widetilde R_t)_{t \ge 0}$ from Definition~\ref{modelII} with
  $\widetilde X_0=x$ and $\widetilde R_0=0$. Define
  $(Y_n,W_n,\eta_n,)_{n\geq 1}$ by
  \begin{align}
    Y_n:=\widetilde X_{\tilde t_n}-\tilde r_{n},\quad 
    W_n:=\tilde r_{n}-\tilde r_{n-1} \quad \text{and} \quad 
    \eta_n:=\tilde t_n-\tilde t_{n-1}. 
  \end{align}
  Note that for any $k$, $(Y_n,W_n,\eta_n)_{n=k+1,k+2,\hdots}$ depends on 
  $(Y_n,W_n,\eta_n)_{n=1,2,\hdots,k}$ only through $Y_k$. That is,
  $(Y_n,W_n,\eta_n,)_{n\geq 1}$ is a Markov chain. For any $n \ge 1$ we denote
  the law of  $(Y_n, W_n, \eta_n)_{n \ge 1}$  by $(P^n_x)$.  
\end{definition}

\begin{remark}[Distribution of \boldmath$Y_n$ and $W_n$]
  \label{rem:distrWY}
  Let $(E_n)_{n=0,1,\dots }$ be iid exponentially distributed with
  parameter $\gamma/\delta$ and let $(U_n)_{n=0,1,\dots }$ be iid
  uniformly distributed on $[0,1]$. Then, recalling
  Definition~\ref{modelII},
  \begin{align*}
    Y_{n+1}  &  \stackrel d=
    \begin{cases}
      (\widetilde X_{\tilde t_{n+1}}-\tilde r_n)(1 - U_n), & \text{ if
      } \tilde r_{n+1} > \tilde r_n,\\ (\widetilde X_{\tilde
        t_{n+1}}-\tilde r_n + E_n), & \text { if } \tilde r_{n+1} \le
      \tilde r_n,
    \end{cases} \\ \intertext{and} W_{n+1} & \stackrel d=
    \begin{cases}
      (\widetilde X_{\tilde t_{n+1}}-\tilde r_n)U_n, & \text{ if }
      \tilde r_{n+1} > \tilde r_n,\\ - E_n, & \text { if } \tilde
      r_{n+1} \le \tilde r_n
    \end{cases}
  \end{align*}
  where $\stackrel d=$ means equality in distribution. In particular,
  on the event $\{ \tilde r_{n+1} > \tilde r_n\}$ the random variables
  $W_{n+1}$ and $Y_{n+1}$ have the same distribution.
\end{remark}

\noindent
In the following two propositions we obtain existence and uniqueness
of the invariant distribution of the Markov chain
$(Y_n,W_n,\eta_n)_{n\ge 1}$.

\begin{proposition}[Existence of invariant distribution]\label{P4}
  For any $x\geq 0$ the sequence of Ces\`aro averages 
  $\left(n^{-1}\sum_{k=1}^{n}{P^k_x}\right)_{n \ge 1}$ converges along a
  subsequence weakly to an invariant distribution $P_x$ of the Markov chain
  $(Y_n,W_n,\eta_n)_{n\ge 1}$.  
\end{proposition}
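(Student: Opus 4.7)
The plan is a standard Krylov--Bogolyubov construction: (i) establish tightness of the Ces\`aro averages $\bar P_x^n := n^{-1}\sum_{k=1}^n P_x^k$ on $[0,\infty)\times\mathbb R\times[0,\infty)$; (ii) extract a weakly convergent subsequence via Prokhorov's theorem; (iii) verify invariance of the limit using the Feller property of the transition kernel $K$ of the chain. Since the one-step transition depends on the past only through $Y_n$ (Definition~\ref{Y_W_eta}), the main task is to control the $Y$-marginals, the other two components being easy to handle.

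I would first derive a one-step drift estimate on $Y_n$. By Remark~\ref{rem:distrWY}, conditional on $Y_n=y$ the random variable $Y_{n+1}$ equals either $\widetilde B_{\eta-}(1-U_n)$ when a new Poisson point appears before the current boundary dissociates, or $\widetilde B_{\eta-}+E_n$ otherwise. A competing-exponentials argument for the two independent killing mechanisms in Definition~\ref{def:killedBM} (with rates $\gamma\widetilde B_t$ and $\delta$) shows that, given $\widetilde B_{\eta-}=b$, the two alternatives occur with respective probabilities $\gamma b/(\gamma b+\delta)$ and $\delta/(\gamma b+\delta)$. Taking expectations and simplifying yields
\[ \mathbb E[Y_{n+1}\mid \widetilde B_{\eta-}=b] \;=\; \frac{b}{2} + \frac{\delta}{\gamma}\cdot\frac{\gamma b/2 + \delta}{\gamma b + \delta} \;\le\; \frac{b}{2} + \frac{\delta}{\gamma}. \]
Integrating against the law of $\widetilde B_{\eta-}$ under $\mathbb P_y$ and using the upper bound $\mathbb E_y[\widetilde B_{\eta-}]\le y + \phi(0)\psi(0)/w$ from the preceding lemma, one obtains $\mathbb E[Y_{n+1}\mid Y_n=y]\le y/2 + C$ for a constant $C=C(\gamma,\delta)$ independent of $y$. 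Iterating this estimate gives $\sup_n \mathbb E_x[Y_n]\le x+2C$, and Markov's inequality then produces tightness of the $Y$-marginals.

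The $\eta$-coordinate is immediate, since $\eta_n$ is stochastically dominated by $Z_n\sim\text{Exp}(\delta)$. The $W$-coordinate is controlled by the same drift calculation: on the up-event $|W_n|$ is bounded above by $\widetilde B_{\eta-}$ starting from $Y_{n-1}$, and on the down-event $|W_n|=E_{n-1}\sim\text{Exp}(\gamma/\delta)$, so the uniform $L^1$-bound on $Y_{n-1}$ transfers to $W_n$. Thus $(P_x^n)_{n\ge 1}$ is tight, and a fortiori so are the Ces\`aro averages $\bar P_x^n$. Prokhorov's theorem produces a subsequence along which $\bar P_x^{n_k}\Rightarrow P_x$. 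The kernel $K$ is Feller because the killed Brownian motion depends continuously in distribution on its starting point and the additional randomisation through $U_n$ and $E_n$ preserves this continuity; hence $\bar P_x^{n_k}K\Rightarrow P_x K$, while by telescoping $\bar P_x^{n_k}K - \bar P_x^{n_k}=(P_x^{n_k+1}-P_x^1)/n_k\to 0$ weakly, so $P_x K = P_x$ as claimed.

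The main obstacle is the drift estimate $\mathbb E[Y_{n+1}\mid Y_n=y]\le y/2+C$; once it is in hand, the remainder is the standard Ces\`aro-averaging/Feller-kernel recipe. The key technical ingredient is the competing-exponentials decomposition of the jump type combined with the explicit first-moment identity for $\widetilde B_{\eta-}$, both of which rely on the scale/killed-diffusion analysis of Section~\ref{sub:killed}.
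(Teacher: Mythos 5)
Your proposal is correct and follows the same overall strategy as the paper: show uniform boundedness of the first moments of $(Y_n,W_n,\eta_n)$, conclude tightness of $(P_x^n)_n$ and hence of the Ces\`aro averages, and extract a subsequential weak limit which is invariant (the paper delegates the last step to a reference to Liggett; you spell out the Feller/telescoping argument, which is fine). The $\eta_n$-bound (domination by $\mathrm{Exp}(\delta)$) and the $W_n$-bound (transfer from $Y_n$ via the up/down decomposition) coincide with the paper's.

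Where you genuinely depart from the paper is in the derivation of the drift estimate for $Y_n$, and your route is cleaner. The paper bounds $Y_{n+1}\le(\widetilde X_{\tilde t_{n+1}}-\tilde r_n)(1-\ind_{\{\text{up}\}}U_n)+E_n$, then needs a lower bound on $\mathbb E_{Y_n}\bigl[\widetilde B_{\eta-}\ind_{\{\text{up}\}}\bigr]=\mathbb E_{Y_n}\bigl[\widetilde B_{\eta-}^2/(\widetilde B_{\eta-}+\delta/\gamma)\bigr]$, which it obtains via Jensen's inequality (convexity of $x\mapsto x^2/(x+\delta/\gamma)$) together with the lower bound $\mathbb E_y[\widetilde B_{\eta-}]\ge c:=\phi(0)\psi(0)/w$; this produces a contraction factor $q=1-\tfrac12\,c/(c+\delta/\gamma)$. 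You instead condition directly on $\widetilde B_{\eta-}=b$ and compute
\[
\mathbb E[Y_{n+1}\mid\widetilde B_{\eta-}=b]
=\frac{\gamma b}{\gamma b+\delta}\cdot\frac b2+\frac{\delta}{\gamma b+\delta}\Bigl(b+\frac\delta\gamma\Bigr)
=\frac b2+\frac\delta\gamma\cdot\frac{\gamma b/2+\delta}{\gamma b+\delta}\le\frac b2+\frac\delta\gamma,
\]
a short algebraic identity that uses only the thinning of the combined killing rate $\gamma b+\delta$ into its two components. Integrating against the law of $\widetilde B_{\eta-}$ and applying only the \emph{upper} bound $\mathbb E_y[\widetilde B_{\eta-}]\le y+c$ gives the contraction $\mathbb E[Y_{n+1}\mid Y_n=y]\le y/2+C$, with the universal factor $1/2$. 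This dispenses with both Jensen's inequality and the lower bound on $\mathbb E_y[\widetilde B_{\eta-}]$, and incidentally yields a sharper contraction constant. The rest of the argument matches the paper's.
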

\begin{proof} 
  In the case $\delta =0$ the assertion is shown in Proposition~5.6 in
  \cite{depper_pfaffel:2010}.  Thus, we assume $\delta>0$ in the rest
  of the proof. We only need to show that the first moments of $Y_n$,
  $W_n$ and $\eta_n$ are bounded uniformly in $n$. This implies that
  the sequence $(P_x^n)_{n \ge 1}$ is tight. Then the sequence of Ces\`aro
  averages $\left(n^{-1}\sum_{k=1}^{n}{P^k_x}\right)_{n \ge 1}$ is also tight. That is,
  any subsequence contains another subsequence along which the Ces\`aro
  averages converge weakly. These weak limits are invariant for the Markov
  chain $(Y_n,W_n,\eta_n)_{n\ge 1}$. A proof of this fact in the continuous
  time case, that can be easily adapted to the discrete time case, can
  be found in \citet[p.~11]{Liggett:85}. 

  The moments of each $\eta_n$ are bounded from above, since by
  construction, see \eqref{eq:taun}, each $\eta_n$ is bounded by an
  exponential random variable with parameter $\delta$. Furthermore by
  construction we have (see Remark~\ref{rem:distrWY})
  \begin{align*}
    Y_{n+1} & = (\widetilde X_{\tilde t_{n+1}}-\tilde
    r_n)\big(\ind_{\{\tilde r_{n+1} > \tilde r_n\}} (1-U_n) +
    \ind_{\{\tilde r_{n+1} \le \tilde
      r_n\}}\big) + \ind_{\{\tilde r_{n+1} \le \tilde r_n\}}  E_n \\
    & \le (\widetilde X_{\tilde t_{n+1}}-\tilde r_n)(1-\ind_{\{\tilde
      r_{n+1} > \tilde r_n\}} U_n) + E_n.
  \end{align*}
  We set $\mathcal{G}_n = \sigma((\widetilde X_{t},\widetilde
  R_{t})_{0 \le t \le\tilde t_n})$ and $c:=\phi(0)\psi(0)/w$. By the
  strong Markov property we obtain
  \begin{align*}
    \mathbb E [(\widetilde X_{\tilde t_{n+1}}-\tilde r_n)| \mathcal G_n]  =
    \mathbb E_{Y_n} [\widetilde X_{\tilde  t_{1}}-\tilde r_0] =   \mathbb
    E_{Y_n} [\widetilde B_{\eta-}], 
  \end{align*} 
  where $\widetilde B_t$ is killed reflecting Brownian motion from 
  Definition~\ref{def:killedBM} and $\eta$ is
  its killing time. Furthermore  
  \begin{align*}
    \mathbb E [(\widetilde X_{\tilde t_{n+1}}-\tilde r_n)\ind_{\{\tilde r_{n+1}>
      \tilde r_n\}} | \mathcal G_n] =  \mathbb E_{Y_n} [(\widetilde X_{\tilde
      t_{1}}-\tilde r_0)\ind_{\{\tilde r_{1}> \tilde r_0\}}] = \mathbb E_{Y_n} 
    \left[ \frac{\widetilde B_{\eta-}^2}{\widetilde
        B_{\eta-}+\delta/\gamma}\right]. 
  \end{align*}
  The function $x \mapsto x^2/(x+\delta/\gamma)$ is convex. Thus, by Jensen's
  inequality 
  \begin{align*}
    \mathbb E [(\widetilde X_{\tilde t_{n+1}}-\tilde r_n)\ind_{\{\tilde r_{n+1}> 
      \tilde r_n\}} | \mathcal G_n] \ge \mathbb E_{Y_n}[\widetilde B_{\eta-}] 
    \frac{\mathbb E_{Y_n}[\widetilde B_{\eta-}]}{\mathbb E_{Y_n}[\widetilde
      B_{\eta-}]+\delta/\gamma}.    
  \end{align*}
  Now using that the function $x\mapsto x/(x+\delta/\gamma)$ is increasing and
  \eqref{eq:1stmomkilledB} we obtain 
  \begin{align*}
    \mathbb E [(\widetilde X_{\tilde t_{n+1}}-\tilde r_n)\ind_{\{\tilde r_{n+1}> 
      \tilde r_n\}} | \mathcal G_n] \ge \mathbb E_{Y_n}[\widetilde B_{\eta-}] 
    \frac{c}{c + \delta/\gamma}.  
  \end{align*}
  Altogether, setting
  \begin{align*}
    q:= 1-\frac12 \frac{c}{c + \delta/\gamma} \in (0,1) 
  \end{align*}
  we obtain 
  \begin{align*}
    \mathbb E_x[ Y_{n+1}] & \le \mathbb E_x [(\widetilde X_{\tilde
      t_{n+1}}-\tilde r_n)(1-\ind_{\{\tilde r_{n+1} > \tilde r_n\}}
    U_n) + E_n] \\ & = \mathbb E_x \left[ \mathbb E[(\widetilde
      X_{\tilde
        t_{n+1}}-\tilde r_n)(1-\ind_{\{\tilde r_{n+1} > \tilde r_n\}} U_n)|
      \mathcal G_n]\right] +  \delta/\gamma   \\ 
    & \le q \mathbb E_x\left[ \mathbb E_{Y_n} [\widetilde
      B_{\eta-}]\right] +
    \delta/\gamma \\
    & \le q (\mathbb E_x[Y_n ] + c) + \delta/\gamma,
  \end{align*}
  where we used \eqref{eq:1stmomkilledB} for the last inequality.
  Uniform boundedness of the first moments of $Y_n$ follows since the
  recursion $x_{n+1} = qx_n + d$ has a unique fixed point for $|q|<1$
  and any $d$.  For $W_{n+1}$ we have
  \begin{align*}
    W_{n+1} & = \ind_{\{\tilde r_{n+1} > \tilde r_n\}} U_n(\widetilde
    X_{\tilde t_{n+1}}-\tilde r_n) - \ind_{\{\tilde r_{n+1} \le \tilde
      r_n\}} E_n.
  \end{align*}
  Now using that on $\{\tilde r_{n+1} > \tilde r_n\}$, $W_{n+1}=|W_{n+1}|$ and
  $Y_{n+1}$ have the same distribution we obtain  
  \begin{align*}
    \mathbb E_x[|W_{n+1}|] = \mathbb E_x[\ind_{\{\tilde r_{n+1} >
      \tilde r_n\}} Y_{n+1} + \ind_{\{\tilde r_{n+1} \leq \tilde
      r_n\}} E_n] \leq \mathbb E_x[Y_{n+1}] + \delta/\gamma.
  \end{align*}
  Uniform boundedness of first moments of $W_n$ follows from uniform
  boundedness of first moments of $Y_n$. \qed
\end{proof}

\begin{proposition}[Uniqueness of invariant distribution]\label{P5}
  Consider the Markov chain $(Y_n,W_n,\eta_n)_{n=1,2,\dots}$ from
  Definition~\ref{Y_W_eta} on the basis of a broken Brownian ratchet,
  Model~II started in $x\geq 0$. If an invariant distribution of the
  Markov chain exists, then it is unique.
\end{proposition}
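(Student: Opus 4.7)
The plan is to reduce uniqueness of the invariant distribution of the triple $(Y_n, W_n, \eta_n)$ to uniqueness for the marginal $Y$-chain, and then to combine a Doeblin-type minorization with the drift already obtained in the proof of Proposition~\ref{P4} to invoke standard Harris-recurrence theory. Since the conditional distribution of $(W_{n+1}, \eta_{n+1})$ given the past depends only on $Y_n$ (together with fresh independent randomness: the Brownian motions $\widetilde B^k$, the Poisson process $\widetilde{\mathcal{N}}^\gamma$, the dissociation clock $Z_{n+1}$, and the boundary-drop variable $E_{n+1}$), any two invariant distributions of the triple sharing the same $Y$-marginal must coincide; so it suffices to handle the $Y$-chain alone.

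For the minorization I would exploit the dissociation event $\{\widetilde\Gamma_n = \emptyset\}$, on which
\[ Y_{n+1} = (\widetilde X_{\widetilde t_{n+1}} - \widetilde r_n) + E_n, \qquad E_n \sim \mathrm{Exp}(\gamma/\delta), \]
with $E_n$ independent of the nonnegative increment $\widetilde X_{\widetilde t_{n+1}} - \widetilde r_n$. Writing the conditional law of the latter in terms of the Green function and killing density from Section~\ref{sub:killed}, the contribution of the dissociation event to the one-step kernel $P(y,\cdot)$ is a convolution of a $y$-dependent density with the Exp$(\gamma/\delta)$ density of $E_n$. Because the exponential factor is bounded below on any compact and the killed-Brownian-motion killing density depends continuously on the starting point $y$, one obtains a uniform lower bound of the form $P(y,\cdot) \ge \varepsilon_K\, \nu_K(\cdot)$ for all $y$ in any compact $K \subset [0,\infty)$, where $\nu_K$ is a nontrivial probability measure on $[0,\infty)$.

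The drift side is essentially already in place: the contraction $\mathbb{E}_x[Y_{n+1}] \le q\, \mathbb{E}_x[Y_n] + d$ with $q \in (0,1)$ established inside the proof of Proposition~\ref{P4} serves as a Foster--Lyapunov condition and implies that every sufficiently large compact $K$ is visited infinitely often. Combining it with the minorization shows that the $Y$-chain is positive Harris recurrent, so its invariant distribution is unique by standard Markov chain theory (e.g.\ Meyn--Tweedie). The case $\delta = 0$ requires a separate argument since the dissociation mechanism is absent, but it is treated in \cite{depper_pfaffel:2010}, so I would quote that result here.

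The main obstacle will be verifying the uniform minorization: one needs both a lower bound on the density of the killing position of the reflected Brownian motion from Definition~\ref{def:killedBM} that is uniform over $y$ in a compact set, and a uniform lower bound on the probability of the dissociation event itself. Both follow in principle from continuity and positivity properties of the Green function derived in Section~\ref{sub:killed}, but checking them carefully---and making sure the resulting $\nu_K$ has enough support to drive the chain toward the support of any invariant distribution---is where the technical work lies.
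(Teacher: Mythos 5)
Your plan is sound but takes a genuinely different route from the paper's. You propose a Foster--Lyapunov/Doeblin argument: reduce to the marginal $Y$-chain (valid, since the transition kernel of the triple depends on the past only through $Y_n$), recycle the one-step contraction $\mathbb{E}[Y_{n+1}\mid Y_n=y]\le q(y+c)+\delta/\gamma$ from the proof of Proposition~\ref{P4} as a drift condition, and manufacture a minorization on compacts by conditioning on the dissociation event, where $Y_{n+1}=D_y+E_n$ with $E_n\sim\mathrm{Exp}(\gamma/\delta)$ independent of $D_y\ge 0$; the exponential tail then yields $P(y,dz)\ge\varepsilon_K e^{-\gamma z/\delta}\ind_{z\ge z_0}\,dz$ for $y$ in a compact $K$, because the down-jump subkernel has density $2\gamma\int_0^z G(y,u)e^{-(z-u)\gamma/\delta}\,du$ and $G$ is jointly continuous and strictly positive. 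That makes compacts small sets, and standard Harris-recurrence theory (Meyn--Tweedie) then gives uniqueness. The paper does something quite different: it rebuilds Model~II from a \emph{single} driving Brownian motion in terms of active points $S_n$, runs two copies started at $x^{(1)}$ and $x^{(2)}$ off the same Brownian path, and shows that whenever both active points lie on the same side of $B_t$ there is a positive rate at which they jump to the \emph{same} uniformly chosen position, so the processes merge in a.s.\ finite time; uniqueness then follows from the coupling inequality. What each buys: your route is modular and rests on off-the-shelf ergodic theory (at the cost of verifying the minorization carefully and treating $\delta=0$ separately, since the dissociation mechanism you lean on disappears there, whereas the paper's coupling handles $\delta=0$ uniformly); the paper's route is self-contained and elementary but requires the ad hoc re-representation of the dynamics and a geometric argument that the coupling event occurs. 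Both are legitimate; just note that the verification you flag as "the technical work"---uniform positivity of the killing-position density and of the dissociation probability over compacts---is real work you would have to carry out, while the paper avoids it entirely.
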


\begin{proof}
  In the proof we use a second graphical construction for the
  $\gamma/\delta$-broken Brownian ratchet, Model~II, $(\widetilde X_t,
  \widetilde R_t)_{t\geq 0}$, which is based on a single Brownian
  motion $(B_t)_{t \ge 0}$ with $B_0=0$; see also
  Figure~\ref{fig:coupling}(A).

  We define the sequence of \emph{active points}
  $(S_n)_{n=0,1,\dots}$, the sequence of reflection boundaries
  $(R_n)_{n=0,1,\dots}$ and a sequence of jump times
  $(\kappa_{n})_{n=0,1,\dots}$ as follows: Let $(\xi_n)_{n=0,1,\dots}$
  be iid exponentially distributed with parameter $1$,
  $(E_n)_{n=0,1,\dots}$ iid exponentially distributed with parameter
  $\gamma/\delta$, and let $(U_n)_{n=0,1,\dots}$ be iid uniformly
  distributed on $(0,1)$. We define $ \kappa_0=0$, $S_0 = x$, $R_0=0$
  and for $n\ge 0$
  \begin{align*}
    \kappa_{n+1} &= \inf\Big\{t > \kappa_{n} : \int_{\kappa_n}^t
    (\gamma |B_s - S_{n}| + \delta) \, ds \ge \xi_{n}\Big\}
  \end{align*}
  is the time of the next event after $\kappa_n$ with rate
  $\gamma|B_s-S_n|+\delta$ at time $s$. At time $\kappa_{n+1}$, there
  are two possibilities:
  \begin{itemize}
  \item[$\bullet$] If $\{S_n \geq B_{\kappa_{n+1}}\}$, set
    \begin{align*}
      (S_{n+1},R_{n+1}) & = \begin{cases} (S_n -
        U_n(S_n - B_{\kappa_{n+1}}),R_{n}+U_n(S_n - B_{\kappa_{n+1}})) \\
        \phantom{ (S_n + E_n,R_n-E_n) } \text{ with probability
        }\frac{\gamma|B_{\kappa_{n+1}} - S_n|}{\gamma|B_{\kappa_{n+1}
          }- S_n| + \delta}, \\ (S_n + E_n,R_n-E_n) \text{ with
          probability }\frac{\delta}{\gamma|B_{\kappa_{n+1} }- S_n| +
          \delta}.\end{cases}
    \end{align*}
  \item[$\bullet$] If $\{S_n < B_{\kappa_{n+1}}\}$, set
    \begin{align*}
      (S_{n+1},R_{n+1}) & = \begin{cases} (S_n +
        U_n(B_{\kappa_{n+1}}-S_n
        ),R_{n}+U_n(B_{\kappa_{n+1}}-S_n)) \\
        \phantom{(S_n - E_n,R_n-E_n) } \text{ with probability
        }\frac{\gamma|B_{\kappa_{n+1}} - S_n|}{\gamma|B_{\kappa_{n+1}
          }- S_n| + \delta}, \\ (S_n - E_n,R_n-E_n) \text{ with
          probability }\frac{\delta}{\gamma|B_{\kappa_{n+1} }- S_n| +
          \delta}.\end{cases}
    \end{align*}
  \end{itemize}
  Thus, the active point jumps uniformly between the currently active
  point and the Brownian motion at rate $\gamma$ times their distance
  or it jumps an exponential distance away from the Brownian
  motion. In particular,
  \begin{align*}
    R_{n+1} & = \begin{cases} R_{n}+U_n|B_{\kappa_{n+1}}-S_n| &
      \text{ with probability }\frac{\gamma|B_{\kappa_{n+1}} -
        S_n|}{\gamma|B_{\kappa_{n+1} }- S_n| + \delta}, \\ R_n - E_n &
      \text{ with probability }\frac{\delta}{\gamma|B_{\kappa_{n+1} }-
        S_n| + \delta},\end{cases}
  \end{align*}
  i.e.\ the reflection boundaries either jump up at rate $\gamma$
  times the distance of the Brownian motion to the active point or
  down at rate $\delta$.

  Now set
  \begin{align*}
    \widetilde R_t & = R_n, \quad \widetilde S_t = S_n, \quad
    \text{for $t \in [\kappa_n,\kappa_{n+1})$}\\ \intertext{and}
    \widetilde X_t & =\widetilde R_t + |B_t-\widetilde S_t| \quad
    \text{for $t \ge 0$.}
  \end{align*}
  Then, $(\widetilde X_t)_{t\geq 0}$ is reflected at $(\widetilde
  R_t)_{t\geq 0}$ and is continuous, since for all $n\geq 1$,
  \begin{align*}
    \widetilde X_{\kappa_{n+1}-} & = R_n + |B_{\kappa_{n+1}-}-S_n|,\\
    \widetilde X_{\kappa_{n+1}} & = \begin{cases} R_n +
      U_n|B_{\kappa_{n+1}-}-S_n| + |B_{\kappa_{n+1}} - S_n -
      U_n(B_{\kappa_{n+1}}-S_n)| \\ \qquad \qquad \qquad \qquad \qquad
      \qquad \text{ with probability }\frac{\gamma|B_{\kappa_{n+1}} -
        S_n|}{\gamma|B_{\kappa_{n+1} }- S_n| + \delta},\\ R_n-E_n + |
      B_{\kappa_{n+1}} - S_n| + E_n \\ \qquad \qquad \qquad \qquad
      \qquad \qquad \text{ with probability
      }\frac{\delta}{\gamma|B_{\kappa_{n+1} }- S_n| + \delta}
    \end{cases}
    \\ & = R_n + |B_{\kappa_{n+1}}-S_n|.
  \end{align*}
  In other words, the process $(\widetilde X_t,\widetilde R_t)_{t\ge
    0}$ is a $\gamma/\delta$-broken Brownian ratchet, Model~II
  starting in $(|x|,0)$.

  We now prove Proposition~\ref{P5} using a coupling argument,
  illustrated in Figure~\ref{fig:coupling}(B). Recall that a coupling
  of two $\gamma/\delta$-broken Brownian ratchets, Model~II, is a
  process $(\widetilde X_t^{(1)}, \widetilde R_t^{(1)}, \widetilde
  X_t^{(2)}, \widetilde R_t^{(2)})$ such that $(\widetilde X_t^{(i)},
  \widetilde R_t^{(i)})$ is a $\gamma/\delta$-broken Brownian ratchet,
  $i=1,2$. It is straightforward, using the same Brownian motion, to
  construct such a coupling along the lines of the above construction,
  where $(\widetilde X_t^{(1)}, \widetilde R_t^{(1)})$ starts in
  $(x^{(1)},0)$ and $(\widetilde X_t^{(2)}, \widetilde R_t^{(2)})$ in
  $(x^{(2)},0)$, respectively, for $x^{(1)} , x^{(2)} \ge 0$. Then, we
  must show that both broken ratchets use the same active points after
  an almost surely finite time. This suffices, since from this time on
  the time- and space-increments at jump times are the same for both
  processes, and hence, both must converge to the same limit law \citep[see
  e.g.][Thm.~21.12]{Lindvall:1992}. 

  In order to show that both broken ratchets use the same active
  points after finite time, note the following: if the active points
  of both ratchets are both above or both below the Brownian motion
  they have a chance to jump to the same point (see
  Figure~\ref{fig:coupling}(B)). Assume for definiteness that at time
  $t$ we have $\widetilde S_t^{(2)} < \widetilde S_t^{(1)} < 
  B_t$ (as at time $t$ in Figure~\ref{fig:coupling}(B)). 
  Then at rate $\gamma (\widetilde S_t^{(2)} -
  \widetilde S_t^{(1)})$ only the lower active point jumps towards the
  $B_t$ and if it does, it jumps uniformly between $\widetilde
  S_t^{(1)}$ and $\widetilde S_t^{(2)}$. Furthermore at rate $\gamma
  (B_t-S_t^{(2)} )$ both jump to the same uniform position between
  $S_t^{(2)}$ and $B_t$. From then on the active points evolve
  together and we refer to this time as the coupling time ($T_c$ in 
  Figure~\ref{fig:coupling}(B)). After the 
  coupling time both ratchets have the same jump times and the same
  increments. Since the Brownian motion will spend infinite amount of
  time below or above both active points the coupling time is almost
  surely finite.  \qed
\end{proof}

\begin{figure}[htb]
  \begin{center}
    (A) \hspace{5cm} (B)\\
    \includegraphics[width=0.45\textwidth]{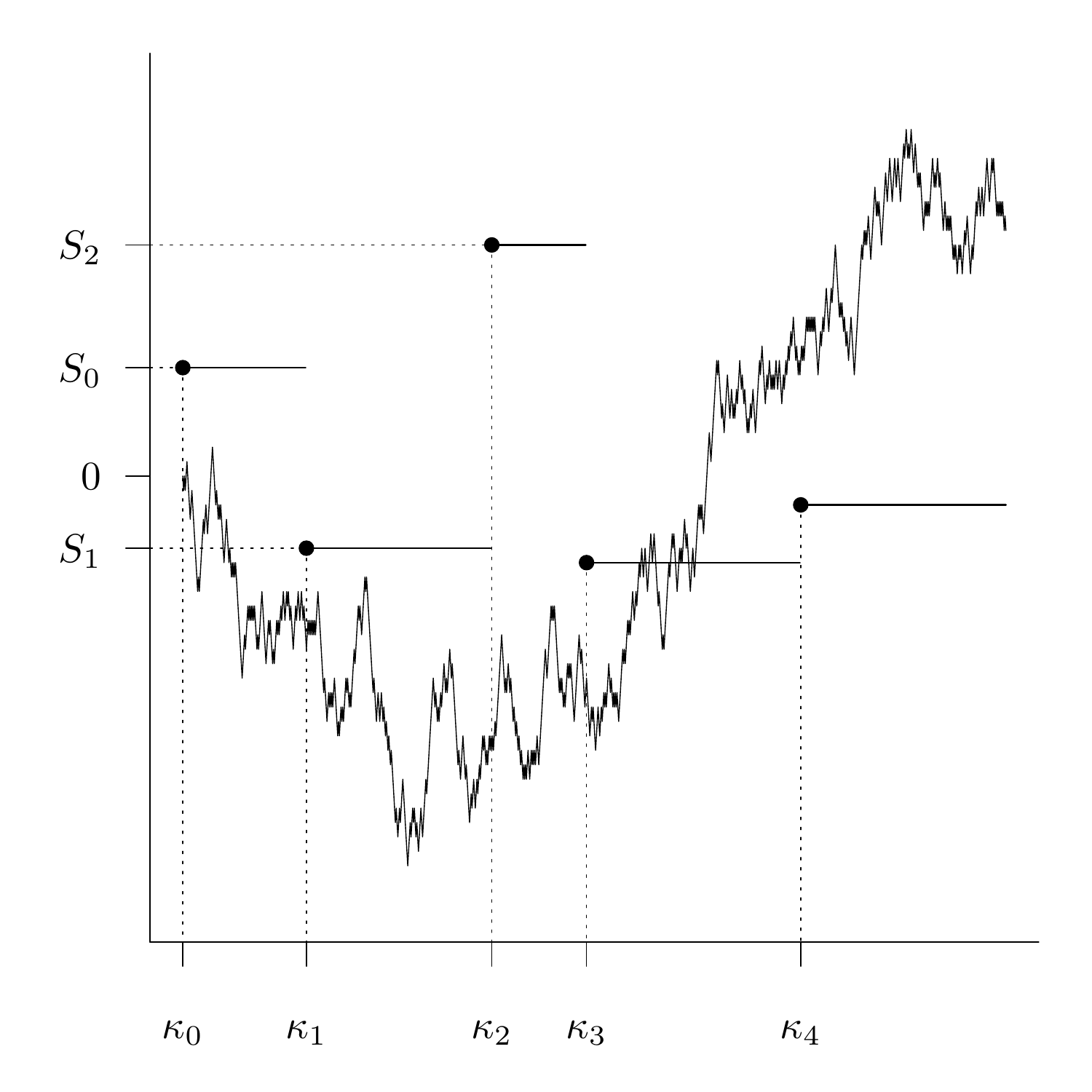}%
    \includegraphics[width=0.45\textwidth]{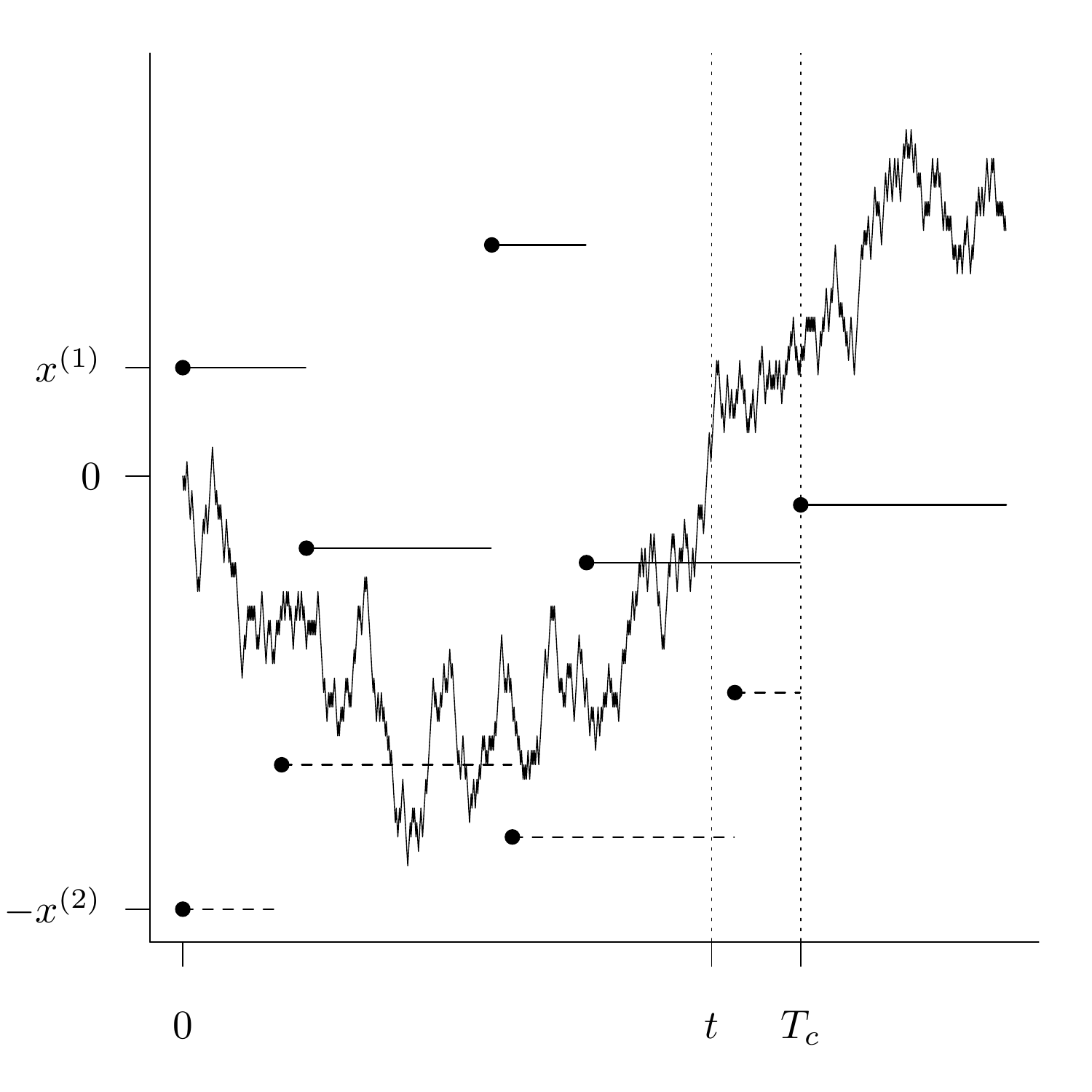}%
    \caption{\label{fig:coupling} (A) Alternative graphical
      construction of Model~II from the proof of
      Proposition~\ref{P5}. (B) Graphical construction of two coupled
      $\gamma/\delta$-broken Brownian ratchet, Model~II, using the
      same Brownian motion, similar to (A). After time
      $\text{T}_{\text{coupl}}$ the active points and the jump times
      of active points are the same for both processes.}
  \end{center}
\end{figure}

\subsection{Proof of Theorem~\ref{T:speedII}}
\label{sub:proofT2}
The proof of Theorem~\ref{T:speedII} comes in three steps. First, we
will connect the speed of the broken Brownian ratchet, Model~II, to
the Markov chain $(Y_n, W_n,\eta_n)_{n\geq 1}$ from
Definition~\ref{Y_W_eta}, and show that the speed equals the ratio of
expected space- and time-increments between jump times. In Step~2, we
use the unique equilibrium of this Markov chain to obtain a fixed
point equation for $Y$, which allows to study its density. In Step~3,
we use insights into the density of $Y$ in order to study expected
increments between jump times.

~

\noindent {\bf Step 1: Connection to the Markov chain \boldmath$(Y_n, W_n,
  \eta_n)$}: In the last section we have established the unique
equilibrium distribution $\pi$ for the Markov chain $(Y_n, W_n,
\eta_n)_{n\geq 1}$. Let $(Y,W,\eta)$ be distributed according to $\pi$
and recall that $Y$ is the distance of the Brownian ratchet to its
reflection boundary at jump times, $W$ is the amount by which the
reflection boundary jumps and $\eta$ is the time between jumps. Note
that $\eta$ equals the killing time of the Brownian motion as
introduced in Definition~\ref{def:killedBM}, if the Brownian motion is
started in an independent copy of $Y$.

The proof works along similar lines as in the proof of Theorem~1
in~\cite{depper_pfaffel:2010}. Formally, Model~II is a cumulative
process similar to the thinned Model~I; see
Remark~\ref{remark_LLN}. Renewal points are times when the Brownian
ratchet hits the reflection boundary, given a jump of the reflection
boundary occurred between renewal points. The law of large numbers for
cumulative processes establishes that $X_t/t$ converges as
$t\to\infty$ to the ratio of expected space- and time increments
between renewal times, almost surely; compare with \eqref{eq:lln1}
below. Then, proceeding exactly as in the proof of Theorem~1
in~\citet[][p.~923]{depper_pfaffel:2010}, it can be seen from the ratio limit
theorem \citep[see e.g.][Thm.~6.6 on p. 231]{Revuz:1984} that 
\begin{align}
  \label{eq:lln1}
  \lim_{t\to\infty} \frac{X_t}{t} = \frac{\mathbb E[W]}{\mathbb E[\eta]}. 
\end{align}

~

\noindent {\bf Step 2: A distributional identity and some
  consequences}: The triple $(Y,W,\eta)$ is distributed according to
the equilibrium $\pi$. Starting the Brownian ratchet in position $Y$,
the reflection boundary at the next jump time either jumps up or
down. Let $U$ be uniformly distributed on $[0,1]$ and denote the
length of the possible downward jump by $Z$, distributed as
$\mathrm{Exp}\left(\delta/\gamma\right)$. By construction
\begin{align}\label{Y_jump}
  Y\stackrel{d}{=}\ind_{\{\text{jump up at $\eta$}\}}
  \widetilde{B}^Y_{\eta-}U + \ind_{\{\text{jump down at $\eta$}\}}
  \left(\widetilde{B}^Y_{\eta-}+Z\right).
\end{align} 
Conditioned on the event that the reflection boundary jumps at time
$\eta$ the probability of an upward jump is
\begin{align*}
  \frac{\gamma\widetilde{B}^Y_{\eta-}}{\gamma\widetilde{B}^Y_{\eta-}+\delta}
  =
  \frac{\widetilde{B}^Y_{\eta-}}{\widetilde{B}^Y_{\eta-}+\delta/\gamma}
\end{align*}
and for a downward jump
\begin{align*}
  \frac{\delta}{\gamma\widetilde{B}^Y_{\eta-}+\delta} =
  \frac{\delta/\gamma}{\widetilde{B}^Y_{\eta-}+\delta/\gamma}.
\end{align*}
Let us write $f_X$ for the density of a random variable
$X$. Equations~(\ref{Y_jump}) and~\eqref{fy} imply
\begin{align*}
  f_Y(z) &=\int_0^\infty \!\!\!\!f_Y(x)\int_0^\infty \!\!\!\!
  f_{\widetilde{B}^x_{\eta-}}(u) \left(\frac{u}{u+\delta/\gamma}
    f_U\left(\frac{z}{u}\right)\frac{1}{u}
    +\frac{\delta/\gamma}{u+\delta/\gamma}f_Z(z-u)\right)\, du\,dx \\
  &=\int_0^\infty f_Y(x)\int_z^\infty f_{\widetilde B^x_{\eta-}}(u)
  \frac{1}{u+\delta/\gamma}\, du\,dx \\ & \qquad \qquad \qquad \qquad
  + \int_0^\infty f_Y(x)\int_0^z
  f_{\widetilde{B}^x_{\eta}}(u)\frac{\delta/\gamma}{u+\delta/\gamma}
  \frac\gamma{\delta} e^{-(z-u)\gamma/\delta} \,du \,dx  \\
  &= 2\gamma \underbrace{\int_0^\infty f_Y(x)\int_z^\infty G(x,u)\, du
    \, dx}_{=:A(z)} \\ & \qquad \qquad \qquad \qquad + 2\gamma
  \underbrace{\int_0^\infty f_Y(x)\int_0^z
    G(x,u)e^{-(z-u)\gamma/\delta} du
    \;dx}_{=:B(z)}.
\end{align*}
In particular we have
\begin{align}
  \label{eq:dens0}
  f_Y(0) = 2\gamma A(0) = 2\gamma \int_0^\infty f_Y(x)\int_0^\infty
  G(x,u)\,du\,dx.
\end{align}
We aim at showing that the pair $(A,B)$ satisfies the differential equation
\eqref{eq:diff-system}. For that we need to differentiate $A$ and $B$.
  The
derivatives of $A$ and $B$ are readily computed as 
\begin{align*}
  A'(z)& =
  -\frac{1}{w}\left[\psi(z)\int_z^\infty{f_Y(x)\phi(x)dx}+\phi(z)\int_0^z{f_Y(x)\psi(x)dx}\right]
  \\ \text{and} \quad B'(z) &=-A'(z) -\frac{\gamma}{\delta}B(z).
\end{align*}
Thus,  
\begin{align}
  \label{eq:1}
  f'_Y(z) & =2\gamma(A'(z)+B'(z))=-\frac{2\gamma^2}{\delta}B(z).
\end{align}
For $A$ we have  
\begin{align} \label{eq:6}
  A(z) & = \int_0^\infty f_Y(x)\int_z^\infty G(x,u)\,du\,dx,   \\
  \notag
  A'(z) &
  =-\frac{1}{w}\left[\psi(z)\int_z^\infty{f_Y(x)\phi(x)dx}+\phi(z)\int_0^z{f_Y(x)\psi(x)dx}\right],
  \\ \label{eq:8} A''(z) & =
  -\frac{1}{w}\left[\psi'(z)\int_z^\infty{f_Y(x)\phi(x)dx}+\phi'(z)\int_0^z{f_Y(x)\psi(x)dx}\right],
  \\ \notag
  \begin{split}
    A'''(z) &
    =-\frac{1}{w}\Bigl[\psi''(z)\int_z^\infty{f_Y(x)\phi(x)dx}+\phi''(z)\int_0^z{f_Y(x)\psi(x)dx}
    \\ & \qquad - f_Y(z) (\psi'(z)\phi(z) -\psi(z)\phi'(z)) \Bigr]\\
    & = f_Y(z) + (2\gamma z + 2\delta)A'(z),
  \end{split}
\end{align}
where we have used that $\phi$ and $\psi$ are solutions of
\eqref{2nd_operator2}, the equation for $A'$ and that the Wronskian
does not depend on $z$. Writing now $f_Y=2\gamma(A+B)$ and using
\eqref{eq:1} we obtain
\begin{align*}
  A'''(z) & = 2\gamma(A(z) +B(z)) +(2\gamma z + 2\delta) A'(z) \\ & =
  - 2\delta (A'(z)
  +B'(z)) + 2\gamma A(z) + (2\gamma z + 2\delta) A'(z) \\
  &= - 2\delta B'(z) + \left( 2\gamma z A(z)\right)'.
\end{align*}
Integrating both sides we have
\begin{align} \label{eq:12} A''(z) &= - 2\delta B(z) + 2\gamma z
  A(z).
\end{align} 
The integration constant vanishes because $B(0)=0$ and $A''(0)=0$, as
can be seen from \eqref{eq:8} (recall that $\psi'(0)=0$). Thus,
$(A,B)$ is a solution of \eqref{eq:diff-system}.

~

\noindent {\bf Step 3: Computation of \boldmath$\mathbb E[W]$ and $\mathbb
  E[\eta]$}: In the case that the reflection boundary jumps up $W$ and
$Y$ have the same distribution. If the reflection boundary jumps down
$W$ is distributed as $-Z$ where $Z$ is exponentially distributed with
parameter $\gamma/\delta$. Thus, the density of the invariant
distribution of $W$ is given by
\begin{align*}
  f_W(z) & = \int_0^\infty f_Y(x)\int_0^\infty
  f_{\widetilde{B}^x_{\eta-}}(u) \Big(\frac{u}{u+\delta/\gamma}
  \ind_{\{z \ge 0\}}f_U\left(\frac{z}{u}\right)\frac{1}{u} \\ & \qquad
  \qquad \qquad \qquad \qquad \qquad \qquad
  +\frac{\delta/\gamma}{u+\delta/\gamma}f_Z(-z) \ind_{\{z<0\}}\Big)\,
  du\,dx  \\
  & = 2\gamma A(z) \ind_{\{z\ge 0\}} + \int_0^\infty
  f_Y(x)\int_0^\infty f_{\widetilde{B}^x_{\eta-}}(u)
  \frac{\delta/\gamma}{u+\delta/\gamma} \frac\gamma{\delta}
  e^{z\gamma/\delta}  \ind_{\{z<0\}}\, du\,dx\\
  & = 2\gamma A(z) \ind_{\{z\ge 0\}} + e^{z\gamma/\delta}
  \ind_{\{z<0\}}
  \int_0^\infty f_Y(x)\int_0^\infty G(x,u) \, du\,dx \\
  & = 2\gamma A(z) \ind_{\{z\ge 0\}} + 2\gamma e^{z\gamma/\delta}
  \ind_{\{z<0\}} A(0).
\end{align*}
In order to compute $\mathbb E[W]$ we have to integrate the last
equations. We use \eqref{eq:12} and \eqref{eq:1} and obtain
\begin{equation}
  \label{eq:W}
  \begin{aligned}
    \mathbb E[W] & = \int_0^\infty 2\gamma z A(z) - 2\gamma z
    e^{-z\gamma/\delta} A(0)dz \\ & = \int_0^\infty A''(z) + 2\delta
    B(z) dz - \frac{2\delta^2}{\gamma} A(0) \\ & = -A'(0) +
    \frac{\delta^2}{\gamma^2}f_Y(0)- \frac{\delta^2}{\gamma^2} f_Y(0)
    = -A'(0).
  \end{aligned}
\end{equation}
Moreover, by \eqref{eq:dens0} we have
\begin{align}
  \label{eq:eta}
  \mathbb E[\eta] = 2 \int_0^\infty f_Y(x) \int_0^\infty G(x,y) \, dy
  \, dx = 2 A(0).
\end{align} 
Here the factor $2$ shows up because we have to integrate the Green
function with respect to the speed measure, which is $m(dy) = 2 \,dy$
in the case of Brownian motion.

Now, Theorem~\ref{T:speedII} follows from \eqref{eq:lln1},
\eqref{eq:W} and \eqref{eq:eta}, since multiplying $A$ by a constant
factor does not change the ratio $-A'(0)/(2A(0))$.


\begin{thebibliography}{30}
\providecommand{\natexlab}[1]{#1}
\providecommand{\url}[1]{{#1}}
\providecommand{\urlprefix}{URL }
\expandafter\ifx\csname urlstyle\endcsname\relax
  \providecommand{\doi}[1]{DOI~\discretionary{}{}{}#1}\else
  \providecommand{\doi}{DOI~\discretionary{}{}{}\begingroup
  \urlstyle{rm}\Url}\fi
\providecommand{\eprint}[2][]{\url{#2}}

\bibitem[{Abdolvahab et~al(2008)Abdolvahab, Roshani, Nourmohammad, Sahimi, and
  Tabar}]{Abdolvahab2008}
Abdolvahab RH, Roshani F, Nourmohammad A, Sahimi M, Tabar MR (2008)
  {{A}nalytical and numerical studies of sequence dependence of passage times
  for translocation of heterobiopolymers through nanopores}. J Chem Phys
  129:235,102

\bibitem[{Abdolvahab et~al(2011)Abdolvahab, Roshani, Nourmohammad, Sahimi, and
  Tabar}]{Abdolvahab2011}
Abdolvahab RH, Roshani F, Nourmohammad A, Sahimi M, Tabar MR (2011) {Sequence
  dependence of the binding energy in chaperone-driven polymer translocation
  through a nanopore}. Phys Rev Letter E 83:011,902

\bibitem[{Abramowitz and Stegun(1972)}]{AbramowitzStegun:1972}
Abramowitz M, Stegun IA (1972) Handbook of Mathematical Functions: with
  Formulas, Graphs, and Mathematical Tables, 10th edn. Dover Publications, New
  York

\bibitem[{Ambjornsson et~al(1992)Ambjornsson, Lomholt, and
  Metzler}]{Ambjornsson:2005}
Ambjornsson T, Lomholt M, Metzler R (1992) Directed motion emerging from two
  coupled random processes: translocation of a chain through a membrane
  nanopore driven by binding proteins. J of Physics-condensed matter 17 47, Sp.
  Iss. SI:3945--3964

\bibitem[{Borodin and Salminen(2002)}]{boro_sal:2002}
Borodin A, Salminen P (2002) Handbook of Brownian Motion - Facts and Formulae,
  2nd edn. Birkh\"auser Verlag

\bibitem[{Budhiraja and Fricks(2006)}]{Fricks2005}
Budhiraja A, Fricks J (2006) {Molecular Motors, Brownian Ratchets, and
  Reflected Diffusions}. Discrete Contin Dyn Syst Series B 6:711--734

\bibitem[{Depperschmidt and Pfaffelhuber(2010)}]{depper_pfaffel:2010}
Depperschmidt A, Pfaffelhuber P (2010) Asymptotics of a brownian ratchet for
  protein translocation. Stochastic Processes and their Applications
  120:901--925

\bibitem[{D'Orsogna et~al(2007)D'Orsogna, Chou, and Antal}]{Orsogna2007}
D'Orsogna MR, Chou T, Antal T (2007) {Exact steady-state velocity of ratchets
  driven by random sequential adsorption}. J Phys A Math Gen 40:5575--5584

\bibitem[{Elston(2002)}]{Elston:2002}
Elston T (2002) The {B}rownian ratchet and power stroke models for
  posttranslational protein translocation into the endoplasmatic reticulum.
  Biophys J 82:1239--1253

\bibitem[{Glick(1995)}]{Glick1995}
Glick BS (1995) {{C}an {H}sp70 proteins act as force-generating motors?} Cell
  80:11--14

\bibitem[{Griesemer et~al(2007)Griesemer, Young, Raden, Doyle, Robinson, and
  Petzold}]{Griesemer2007}
Griesemer M, Young C, Raden D, Doyle F, Robinson A, Petzold L (2007)
  {Computational Modeling of Chaperone Interactions in the Endoplasmic
  Reticulum of Saccharomyces Cerevisai}

\bibitem[{Li and Chiu(2010)}]{Li2010}
Li HM, Chiu CC (2010) {{P}rotein transport into chloroplasts}. Annu Rev Plant
  Biol 61:157--180

\bibitem[{Liebermeister et~al(2001)Liebermeister, Rapoport, and
  Heinrich}]{Liebermeister:2001}
Liebermeister W, Rapoport T, Heinrich R (2001) Ratcheting in post-translational
  protein translocation: A mathematical model. J Mol Biol 305:643--656

\bibitem[{Liggett(1985)}]{Liggett:85}
Liggett TM (1985) Interacting Particle Systems. Springer-Verlag

\bibitem[{Lindvall(1992)}]{Lindvall:1992}
Lindvall T (1992) Lectures on the Coupling Method, 1st edn. Wiley

\bibitem[{Matlack et~al(1999)Matlack, Misselwitz, Plath, and
  Rapoport}]{Matlack1999}
Matlack KE, Misselwitz B, Plath K, Rapoport TA (1999) {{B}i{P} acts as a
  molecular ratchet during posttranslational transport of prepro-alpha factor
  across the {E}{R} membrane}. Cell 97:553--564

\bibitem[{Neupert and Brunner(2002)}]{NeupertBrunner2002}
Neupert W, Brunner M (2002) {{T}he protein import motor of mitochondria}. Nat
  Rev Mol Cell Biol 3:555--565

\bibitem[{Neupert and Herrmann(2007)}]{NeupertHermann2007}
Neupert W, Herrmann JM (2007) {{T}ranslocation of proteins into mitochondria}.
  Annu Rev Biochem 76:723--749

\bibitem[{Okamoto et~al(2002)Okamoto, Brinker, Paschen, Moarefi, Hayer-Hartl,
  Neupert, and Brunner}]{Okamoto2002}
Okamoto K, Brinker A, Paschen SA, Moarefi I, Hayer-Hartl M, Neupert W, Brunner
  M (2002) {{T}he protein import motor of mitochondria: a targeted molecular
  ratchet driving unfolding and translocation}. EMBO J 21:3659--3671

\bibitem[{Peskin et~al(1993)Peskin, Odell, and Oster}]{Peskin_al:1993}
Peskin C, Odell G, Oster G (1993) Cellular motions and thermal fluctuations:
  The brownian ratchet. Biophys J 65:316--324

\bibitem[{Pfanner and Truscott(2002)}]{Pfanner2002}
Pfanner N, Truscott KN (2002) {{P}owering mitochondrial protein import}. Nat
  Struct Biol 9:234--236

\bibitem[{Rapoport(2007)}]{Rapoport2007}
Rapoport TA (2007) {{P}rotein translocation across the eukaryotic endoplasmic
  reticulum and bacterial plasma membranes}. Nature 450:663--669

\bibitem[{Revuz(1984)}]{Revuz:1984}
Revuz D (1984) Markov Chains, 2nd edn. American Elsevier, North-Holland

\bibitem[{Roginsky(1994)}]{Roginsky:1994}
Roginsky AL (1994) A central limit theorem for cumulative processes. Adv Appl
  Prob 26:104--121

\bibitem[{Schneider et~al(1994)Schneider, Berthold, Bauer, Dietmeier, Guiard,
  Brunner, and Neupert}]{Schneider1994}
Schneider HC, Berthold J, Bauer MF, Dietmeier K, Guiard B, Brunner M, Neupert W
  (1994) {{M}itochondrial {H}sp70/{M}{I}{M}44 complex facilitates protein
  import}. Nature 371:768--774

\bibitem[{Simon et~al(1992)Simon, Peskin, and Oster}]{Simon_al:1992}
Simon S, Peskin C, Oster G (1992) What drives the translocation of proteins?
  Proc Natl Acad Sci USA 89:3770--3774

\bibitem[{Smith(1955)}]{Smith:1955}
Smith W (1955) Regenerative stochastic processes. Proc Roy Soc London Ser. A.
  232:6--31

\bibitem[{Strittmatter et~al(2010)Strittmatter, Soll, and
  Bolter}]{Strittmatter2010}
Strittmatter P, Soll J, Bolter B (2010) {{T}he chloroplast protein import
  machinery: a review}. Methods Mol Biol 619:307--321

\bibitem[{Wickner and Schekman(2005)}]{Wickner2005}
Wickner W, Schekman R (2005) {{P}rotein translocation across biological
  membranes}. Science 310:1452--1456

\bibitem[{Zandi et~al(2003)Zandi, Reguera, Rudnick, and Gelbart}]{Zandi2003}
Zandi R, Reguera D, Rudnick J, Gelbart WM (2003) {{W}hat drives the
  translocation of stiff chains?} Proc Natl Acad Sci USA 100:8649--8653

\end{thebibliography}

\end{document}